\newtheorem{Theorem}{Theorem}
\newtheorem{Corollary}{Corollary}
\newtheorem{Lemma}{Lemma}
\newtheorem{Proposition}{Proposition}
\newtheorem{Remark}{Remark}
\newenvironment{proof}{{\em Proof:} \ \ }{\begin{flushright}$\Box$\end{flushright}}
\newcommand{\R}{\mathbb{R}}
\newcommand{\AM}{\mathbf{A}}
\newcommand{\CM}{\mathbf{C}}
\newcommand{\IM}{\mathbf{I}}
\newcommand{\HM}{\mathbf{H}}
\newcommand{\KM}{\mathbf{K}}
\newcommand{\LM}{\mathbf{L}}
\newcommand{\MM}{\mathbf{M}}
\newcommand{\QM}{\mathbf{Q}}
\newcommand{\VM}{\mathbf{V}}
\newcommand{\ZM}{\mathbf{Z}}
\newcommand{\SMM}{\mathbf{\Sigma}}
\newcommand{\av}{\mathbf{a}}
\newcommand{\bv}{\mathbf{b}}
\newcommand{\hv}{\mathbf{h}}
\newcommand{\xv}{\mathbf{x}}
\newcommand{\yv}{\mathbf{y}}
\newcommand{\vv}{\mathbf{v}}
\newcommand{\wv}{\mathbf{w}}
\newcommand{\zv}{\mathbf{z}}
\begin{document}

\title{Worst-case Prediction Performance Analysis of the Kalman Filter}

\author{Sholeh~Yasini
        and~Kristiaan~Pelckmans
\thanks{This work is supported by Swedish Research Council under contract
	621-2007-6364.
	S. Yasini (corresponding author) and K. Pelckmans are with the 
Division of Systems and Control, Department of Information Technology,
Uppsala University, Box 337, SE-751 05 Uppsala, Sweden. (Emails: sholeh.yasini@it.uu.se
, kp@it.uu.se).}}

\maketitle

\begin{abstract}
In this paper, we study the {prediction} performance of the Kalman filter (KF) in a worst-case, {\em minimax} setting { as studied in online machine learning, information - and game theory.} { The aim is to predict the sequence of observations almost as well as the best reference predictor (comparator) sequence in a comparison class. We prove worst-case bounds on the cumulative squared prediction errors using a priori knowledge about the complexity of reference predictor sequence. In fact, the performance of the KF is derived as a function of the performance of the best reference predictor and the total amount of drift occurs in the schedule of the best comparator.}
\end{abstract}

\begin{IEEEkeywords}
	Kalman filter, $H_{\infty}$ estimation, Online machine learning, Tracking worst-case bounds.
\end{IEEEkeywords}

\IEEEpeerreviewmaketitle

\section{Introduction}
\IEEEPARstart{S}{ince} its inception in the 1960s, the Kalman Filter (KF) 
has been one of the most powerful tools in signal estimation and control engineering.
It is a technique for estimating  the unknown states of dynamical 
systems from a set of the resulting noisy measurements, see e.g. \cite{Kailath} for an overview. 
{ It is well-known that the KF yields the Minimum Variance Estimator (MVE) if the noise terms are assumed to be zero-mean white Gaussian processes. However, such Gaussian assumption may limit the utility of the estimators in situations where the noise terms behave quite differently. If the Gaussian assumption is discarded, the filter gives the Linear Minimum Variance Unbiased Estimator (LMVUE). 
	The problem of state-estimation in a stochastic setting with unknown distribution on the random terms has been studied (e.g., see \cite{Aliev}, \cite{Maryak95}, \cite{Maryak}, \cite{Spall95} and references therein). 
	The approach presented in \cite{Aliev}, \cite{Maryak95} is based on asymptotic distribution theory for state estimator in the absence of the usual Gaussian assumptions on the noise terms. 
	\cite{Maryak} and \cite{Spall95} show how inequalities such as Chebyshev inequality from probability theory can be employed for characterizing the uncertainty bounds of the estimation error in a general distribution free setting. 
	However, these methods assume an underlying stochastic dynamical model with known finite second order moments for the noise terms. Moreover, they assume that the random terms (the initial state, process noise and measurement noise) are mutually independent. In many applications, however, one is faced with lack of statistical knowledge of noise terms. A natural question to ask is what the performance of the KF will be with respect to disturbance variation and lack of statistical knowledge of the disturbance terms.} Some $H_{\infty}$ estimation methods \cite{Hassibi-kf}, \cite{Hassibi-lms} have been developed to study the performance of adaptive filtering without requiring statistical assumptions. Such $H_{\infty}$ estimators are safeguard against the {\em worst-case} disturbance that maximizes the energy gain to the estimation errors.  {These techniques assume the existence of a linear model which together with additive noise generates the outputs. 
	Performance analysis of such approach does not address the situation where the generative models are incorrectly specified.} 

{ In this note, we aim to study the performance of the KF through analyzing its game-theoretic-based setting.
	This work is inspired by the worst-case, {\em minimax} setting in online machine learning as surveyed in \cite{Nikolo}. This approach is based on the {\em universal prediction} perspective which attempts to perform well on any sequence of observation without assuming a generative model of the data. 
	Our approach is essentially different from stochastic state-space analysis in that we abandon the basic assumption that the output sequence is generated by an underlying stochastic process. 
	Instead of making assumption on the data generating process, we ask: can we predict the sequence of outputs online almost as well as the best reference predictor (comparator) {\em in hindsight} (in this case, in hindsight means that the reference predictor corresponds to the output of an offline algorithm with access to all the data simultaneously)?
	
	
	The online algorithm is formulated as an iterated game between the {\em player} and the {\em environment}}. { At each round or {\em trial} of this game, the player
	is challenged to guess an unknown output vector generated by the environment. 
	The player computes its prediction for the output by combining the entries of a known input matrix with the information collected in the past trials. 
	After this, the actual output is revealed, and the player incurs a loss computed according to a fixed {\em convex loss function}, measuring the discrepancy between the player's prediction and the observed output. 
	As no assumption is made on how the environment generates the sequence of outputs, the player would accumulate an arbitrary high loss. 
	To set a reasonable goal, a {\em competitive} approach is adopted: the performance of the player is measured against the performance of the comparator from some comparison class $\mathcal{X}$. 
	In the extreme case, the sequence of observations could be completely random, in which it could be predicted by neither the player nor any reference predictor from the comparison class $\mathcal{X}$. On the other hand, the sequence of outputs might be completely predictable by a reference predictor from $\mathcal{X}$, in which case the player should incur only a small loss before learning to follow that predictor. In general, the goal of the player is to achieve an average loss of prediction that is not too large compared to the average loss of the best offline reference predictor.}

Recently, there has been an increasing interest 
in the machine learning community for analyzing the Least Mean Square (LMS) algorithm \cite{Nikolo-96}, \cite{Kivinen-97} and 
the Recursive Least Square (RLS) algorithm \cite{Froster-99}, \cite{Vovk-01} 
in a worst-case setting, see also \cite{anava2013}. 
{The aforementioned work analyzes the performance of the algorithm in the stationary setting and guarantee that the algorithm performs almost as well as the best offline single reference predictor.}  However, in many real-world applications,
particularly, in an adaptive context not accounting for the effect of parameter 
variations is insufficient. This motivates the study
of the performance of algorithms that are able to compete with 
the best reference predictor that drifts over time. 
Naturally, tracking worst-case bounds are, { in general}, much harder to prove than worst-case bounds for stationary targets. {The tracking problem in the minimax setting has been considered previously: \cite{Herbster-01} derived tracking bounds for general gradient descent algorithms and proposed a generic method for converting the stationary regret bounds into tracking regret bounds.} For online regression algorithms, 
tracking regret bounds were studied and analyzed in
\cite{Moroshko}, \cite{Vaits-11}, \cite{Vaits-15}. \cite{Vaits-11} uses a 
projection step to control the eigenvalues of a
covariance-like matrix using scheduled resets, whereas the
algorithm designed in \cite{Moroshko} is based on a last-step technique 
\cite{Froster-99} for which the eigenvalues 
are controlled implicitly. { However, they lack a mechanism for incorporating dynamical models for effective prediction and tracking performance.}


{ We provide new tracking worst-case bounds for the KF by 
	analysing its game-theoretic-based model. 
}
The proposed tracking bounds
rely on a general notion of the drift or the comparator complexity as it is measured in terms of how much it deviates from a known fixed dynamical model.
{ It is noteworthy that incorporating a predetermined dynamical model in the comparator's complexity, makes the problem more difficult and additional consideration is required for deriving tracking bounds for the KF compared to online regression algorithm \cite{Vaits-11, Vaits-15}.}

The precise setup is given in the next subsection. Subsection B relates the current approach to an
$H_{\infty}$ setting. In Section II 
the main results are given and 
technical comparison with the $H_{\infty}$ setting is given in Section III. 
Section IV illustrates the theoretical results using simulation. Finally, 
the paper is concluded with a discussion of the results in Section V.



\subsection{Problem Formulation}
{ In this subsection, we study the game-theoretic-based formulation of the KF where an adversary (environment) generates the sequence of observations. To establish the worst-case bounds, we combine the state update mechanism from the classical KF with the online learning setting from universal prediction.
	The algorithm is defined by a 4-tuple $(\AM, \CM, \VM, \QM)$ where $\VM \in \R^{p \times p}$, $\QM \in \R^{n \times n}$ are user-defined positive definite matrices, and $\AM \in \R^{n \times n}$, $\CM\in \R^{p \times n}$ are time-invariant matrices assumed to be known and given to the algorithm.}

The game (as implemented here by the KF) proceeds in trials $t = 0, 1, \dots, T - 1$, where $T$ is any arbitrary value.
{ The player maintains a parameter vector (hypothesis), denoted by $\hat \xv_t \in \R^n$, and a positive definite matrix $\SMM_t \in \R^{n \times n}$. In each trial $t$  the player makes a prediction
	(here we concentrate on {\em linear predictors})
	$\hat \yv_t = \CM \hat \xv_t$.
	
	Then, the environment reveals the actual output and the player incurs the corresponding loss $\| \yv_t - \CM \hat \xv_t \|_{\VM^{-1}}^2$, 
	Finally, the player updates 
	its prediction rules as
	\begin{align}
	\hat \xv_{t+1}  = & \AM \hat \xv_t +\underbrace{\AM \left(\SMM_t^{-1}  + 
		\CM^\top \VM^{-1} \CM \right)^{-1} \CM^\top \VM^{-1}}_{\KM_t}   \left(\yv_t - \hat \yv_t \right) {\label{kt}}\\
	\SMM_{t+1}  = & \AM \left(\SMM_t^{-1} + \CM^\top \VM^{-1}\CM \right)^{-1} \AM^\top + \QM, {\label{riccati}}
	\end{align}
	and proceeds to the next round. The update of the parameter vector $\hat \xv_t$ is additive, with the error $(\yv_t - \hat \yv_t)$ scales by the (Kalman) gain $\KM_t$ and the matrix $\SMM_t$ is updated according to the Riccati recursion.} 
The algorithm is initialised by a fixed user-defined $\hat \xv_0$ and $\SMM_0$. Algorithm \ref{algkf} details the precise protocol.

The total loss of the player on the sequence of 
observations $S  = \{\yv_0, \yv_1, \dots\, \yv_{T-1}\}$ is
\begin{equation} 
L_T = \sum_{t = 0}^{T-1} \| \yv_t - \CM \hat \xv_t \|_{\VM^{-1}}^2.
\end{equation}\label{cumloss}

{According to the methodology of worst-case bounds as set out in online learning theory (\cite{Nikolo}), this $L_T$ is compared to the total loss of any reference predictor $\bar p \in \mathcal{X}$ where $\mathcal{X}$ is the comparison class of predictors. 
	To any arbitrary sequence of comparator $\{\bar \xv_0, \bar \xv_1, \dots, \bar \xv_{T} \}$, we associate a linear predictor, defined as $\bar p(\{\bar \xv_t\}) = \CM \bar \xv_t$ with $\bar \xv_t \in \R^n$. Then any set $\mathfrak{X} \subseteq \R^n$ of vectors defines a comparison class $\mathcal{X}$ of linear predictors by $\mathcal{X} = \{\bar p (\bar \xv_t) | \bar \xv_t \in \mathfrak{X}, t = 0, 1, \dots, T\}$. Given a sequence of observations $S  = \{\yv_0, \yv_1, \dots, \yv_{T-1} \}$, the total loss of the reference predictor $\bar p (\{\bar \xv_t\})$ is defined as }

\begin{equation}
V_T = \sum_{t=0}^{T-1} \| \yv_t - \CM \bar \xv_t \|_{\VM^{-1}}^2.
\end{equation}
{Now, the aim of the player is to incur small loss relative to the reference predictor sequence in $\mathcal{X}$.
	Hence, for any $T$, the goal is to obtain an upper bound of the form
	\begin{equation} \label{worstbound}
	L_T \leq \inf_{\{\bar\xv_0, \bar \xv_1 \dots, \bar \xv_{T}  \} \in \mathcal{X}} \{c_1 V_T + c_2 \underbrace{\textrm{ size}_{\AM} (\{ \bar\xv_0, \bar\xv_1, \dots, \bar \xv_{T} \})}_{\text{comparator complexity}} + c_3 \},
	\end{equation}}
where $c_1$, $c_2$, and $c_3$ are positive constants,
and '$\textrm{size}_{\AM} (\{ \bar\xv_0, \bar\xv_1, \dots, \bar \xv_{T} \})$' measures, intuitively, the complexity of the reference predictor in terms of how much it deviates from a {\em given dynamical model} $\AM$:
\begin{equation} \label{comparatorcomplexity}
\textrm{ size}_{\AM} (\{ \bar\xv_0, \bar\xv_1, \dots, \bar\xv_{T} \}) \equiv \sum_{t=0}^{T-1} \| \bar\xv_{t+1} - \AM \bar\xv_t \|^2 ,
\end{equation} 
also referred to as the total drift. {Note that the infimum (minimum) in (\ref{worstbound}) depends on all the observations and can hence correspond to an offline solution computed after observing all the data. }
\begin{Remark}
	{An important feature of the Algorithm 1 is that a fixed and known dynamical model is incorporated in the learning process}. The worst-case bound (\ref{worstbound}) scales proportionally 
	to the comparator complexity (\ref{comparatorcomplexity}),  from a sequence evolving with known dynamics. If the reference predictor follows the dynamics well as described by the system matrix $\AM$, then the $\textrm{size}_{\AM} (\{ \bar\xv_0, \bar\xv_1, \dots, \bar \xv_{T} \})$ would be small and one would get a small bound. If, however, $\textrm{size}_{\AM} (\{ \bar\xv_0, \bar\xv_1, \dots, \bar \xv_{T} \})$ is large (e.g. because of misspecification), the resulting bound will become rather large. This insight is substantiated both theoretically (Section II) as well as experimentally (see Section IV).
\end{Remark}

{ \begin{Remark}
		Whereas our algorithm always predict according to a linear function as hypothesis using a fixed and known matrix $\CM$, the resulting worst-case bound does not require the outputs $\{\yv_t \}$ to come from a linear system. It is worth mentioning that our bounds will be parametrized by the weights of the reference predictor, (i.e., $\{\bar \xv_t \}$). However, the algorithm for which we prove these bounds does not need the vectors $\{\bar \xv_t \}$ as input parameter. 
	\end{Remark}}
	
	{\begin{Remark} \label{Rem2}
			In cases where the constant $c_1 = 1$, one can rewrite (\ref{worstbound}) in terms of the \emph{relative regret} which is defined as the discrepency between the total loss of the player 
			and that of the reference predictor $\bar p (\{\bar \xv_t\})\in \mathcal{X}$
			\begin{equation} \label{eq:reg}
			\mathcal{R}_T (\{\bar \xv_t \}) = L_T - V_T. 
			\end{equation}
			The goal of the player is to minimize the supremum of the regret with respect to the comparison class $\mathcal{X}$ 
			and for an arbitrary sequence of observations $S$ 
			chosen by the environment
			\begin{equation}
			\begin{split}
			\mathcal{R}_T \equiv \sup_{ \{\bar\xv_0, \bar \xv_1, \dots, \bar \xv_{T} \} \in \mathcal{X}}  \mathcal{R}_T(\{\bar\xv_t\}) & = L_T - \inf_{ \{\bar\xv_0, \bar \xv_1 \dots, \bar \xv_{T}  \} \in \mathcal{X}} V_T \\
			& \leq c_2 \textrm{ size}_{\AM} (\{ \bar\xv_0, \bar\xv_1, \dots, \bar \xv_{T} \}) + c_3 ,
			\end{split}
			\end{equation}
			where $c_2$ and $c_3$ are positive constants. We restate the player's aim as having a {\em low regret}, by which we mean that $\mathcal{R}_T$ grows sublinearly with the number of rounds $T$, i.e., $\mathcal{R}_T = o(T)$.
			Intuitively, the regret measures how sorry the player is, {\em after} seeing all the data (i.e., in hindsight), for not having followed the predictions of the best reference predictor in the comparison class $\mathcal{X}$.

		\end{Remark}}
		
		\begin{algorithm} []
			\caption{Game-theoretic-based Kalman filter}
			\label{algkf}
			\begin{algorithmic}
				\REQUIRE $\CM \in \R^{p \times n}$, $\AM \in \R^{n \times n}$. 
				Set $\SMM_0 = \IM_n$ (identity matrix), $\hat \xv_0 = \mathbf{0}$, $\QM \succ 0$, $\VM \succ 0$.   \\
				\FOR{t = 0, 1, \dots}  
				\STATE (1) Player predicts $\hat \yv_t = \CM \hat \xv_t$ 
				\STATE (2) Environment reveals $\yv_t$
				\STATE (3) Player incurs loss $ \lVert \yv_t - \CM \hat \xv_t \rVert_{\VM^{-1}}^2$
				\STATE (4) Player updates estimate as 
				\begin{equation}
				\begin{split}
				\hat \xv_{t+1} & = \AM \hat \xv_t + \AM \left(\SMM_t^{-1}  + 
				\CM^\top \VM^{-1} \CM \right)^{-1} \CM^\top \VM^{-1} \left(\yv_t - \hat \yv_t \right) \\
				\SMM_{t+1} & = \AM \left(\SMM_t^{-1} + \CM^\top \VM^{-1}\CM \right)^{-1} \AM^\top + \QM. \nonumber
				\end{split}
				\end{equation}
				\ENDFOR
			\end{algorithmic}
		\end{algorithm}

		\subsection{Relation to the State-space $H_\infty$ Estimation}
		The worst-case/{\em minimax} 
		analysis in the regret framework is closely related to the $H_{\infty}$ estimation theory, 
		see e.g. \cite{Hassibi-krein, Hassibi-lms, Shaked} and references therein:
		assume that there exists a 
		linear time-invariant (LTI) model explaining the data:
		\begin{align} \label{sysdyn}
		\bar\xv_{t+1} = \AM \bar\xv_{t} + \wv_{t}  \nonumber \\ 
		\yv_t = \CM \bar\xv_t + \vv_t,
		\end{align}
		where $\bar\xv_t \in \R^n$ is an unknown true parameter state vector, $\yv_t \in \R^p$ is the measured output, 
		$\wv_{t} \in \R^n$ is the process noise and $\vv_t \in \R^p$ is
		referred to as observation residual. Let $\bar \zv_t = \LM \bar \xv_t$ be any linear combination
		of the state $\bar \xv_t$ where $\LM \in \R^{q \times n}$ and let $\hat \zv_t = \mathcal F_p (\yv_0, \dots, \yv_{t-1})$ denote an estimate of $\bar \zv_t$ given past observations $\{\yv_0, \dots, \yv_{t-1}\}$. Let $\mathcal{T}_{p,T} (\mathcal{F}_p)$ denote the transfer operator that maps the unknown normalized disturbances 
		$ \{ \SMM_0^{-1/2} (\bar \xv_0 - \hat \xv_0), \{\QM^{-1/2} \wv_t, \VM^{-1/2} \vv_t \}_{t=0}^{T - 1} \} $ to the 
		normalized predicted estimation error $\{\ZM^{-1/2} (\bar \zv_t - \hat \zv_t) \}_{t = 0}^{T - 1}$ where $\SMM_0$, $\QM$, and $\ZM$ are user-defined positive definite weighting matrices and $\hat \xv_0$ denotes an initial guess of $\bar \xv_0$.
		The $H_{\infty}$ norm of $\mathcal{T}_{p,T} (\mathcal{F}_p)$ denoted as $\| \mathcal{T}_{p,T} (\mathcal{F}_p) \|_{\infty}$ is defined as the maximum energy gain from unknown disturbances to the predicted estimation error. An $H_{\infty}$ optimal strategy $\hat \zv_t = \mathcal{F}_p (\yv_0, \dots, \yv_{t-1})$ minimizes 
		$\| \mathcal{T}_{p,T} (\mathcal{F}_p) \|_{\infty}$, and achieves the optimal attenuation factor $\gamma^*$
		\begin{equation}
		\begin{split}
		\gamma_*^2 =& \inf_{\mathcal{F}_p} \| \mathcal{T}_{p,T} (\mathcal{F}_p) \|^2_{\infty} \\
		=& \inf_{\mathcal{F}_p} \sup_{\bar \xv_0, \wv , \vv \in l_2} 
		 \frac{\sum_{t = 0}^{T - 1}\| \bar \zv_t - \hat \zv_t\|_{\ZM}^2}{  \| \bar \xv_0 \|^2_{\SMM_0^{-1}} + \sum_{t=0}^{T - 1}\| \vv_t \|_{\VM^{-1}}^2 + \sum_{t=0}^{T - 1} \| \wv_t \|^2_{\QM^{-1}} } ,
		\end{split}
		\end{equation} 
		where $l_2$ denotes the space of square-summable sequences, and the infimum is taken over all strictly causal estimators $\mathcal{F}_p$. Note that in the above, and as done throughout the paper, we assumed $\hat \xv_0 = \mathbf{0}$. A simpler problem would be to relax the minimization problem, and obtain a sub-optimal solution, i.e., given scalar $\gamma > 0$, find estimation strategies $\hat \zv_t = \mathcal{F}_p (\yv_0, \dots, \yv_{t-1})$ that achieve
		\begin{equation}
		\| \mathcal{T}_{p, T} (\mathcal{F}_p) \|_{\infty} \leq \gamma ,
		\end{equation} 
		for all $T$.
		Results from the $H_{\infty}$ setting state
		that the $H_{\infty}$ filter ensures the desired attenuation level, $\gamma$, provided that the solution of an associated Riccati equation satisfies a suitable feasibility 
		assumption \cite{Hassibi-krein, Shaked}. 
		It is also shown in \cite{Hassibi-kf}
		that the $H_{\infty}$ norm of the RLS algorithm depends on the input-output data while for the LMS and normalized LMS algorithms the $H_{\infty}$ norm is simply unity \cite{Hassibi-lms}. The results further show that for the filtered normalized estimation error $\{\VM^{-1/2} (\CM \bar \xv_t - \CM \hat \xv_{t|t})\}_t$, where $\hat \xv_{t|t}$ denotes the estimate of $\bar \xv_t$, given $\{\yv_0, \dots, \yv_t \}$, the $H_{\infty}$ norm of RLS and KF is bounded by two. Nevertheless, for the predicted estimation error $\{\VM^{-1/2} (\CM \bar \xv_t - \CM \hat \xv_{t})\}_t$, which does not have access to current observations, the $H_{\infty}$ performance of the RLS and KF can be much larger \cite{Hassibi-kf}.
		
		Motivated by the similarity between the worst-case (regret) analysis 
		in online machine learning and $H_{\infty}$ setting in adaptive filtering, Kivinen \emph{et al.} \cite{Kivinen-06} 
		employed techniques of machine learning  
		\cite{Nikolo-96} and applied them in filtering problem 
		to analyze the LMS algorithm by resorting to the study of Bregman divergences. 
		
		This connection formed also the basis for \cite{Hassibi-kf}, 
		which is related to the present study.
		To ensure a fair comparison, a worst-case bound
		is derived based on the $H_{\infty}$ bound of the KF in \cite{Hassibi-kf} and is compared 
		with the proposed bounds in Section III.
		

		\section{Analysis}
		{In this section, we analyse the game-theoretic-based setting of the KF.}
		
		In many cases, $\SMM_t$ (\ref{riccati}) (and hence the Kalman gain $\KM_t$ (\ref{kt})) converges to the steady-state value. The limiting solution $\SMM$ will satisfy the following Discrete Algebraic Riccati Equation (DARE)
		\begin{equation} \label{ARE}
		\SMM = \AM (\SMM^{-1} + 
		\CM^\top \VM^{-1} \CM)^{-1} \AM^\top + \QM,
		\end{equation}
		and 
		\begin{equation} \label{Kbar}
		\KM = \AM \left(\SMM^{-1} + \CM^\top \VM^{-1} \CM \right)^{-1} \CM^\top \VM^{-1} ,
		\end{equation} 
		is the steady-state Kalman gain. We shall be particularly interested in real, symmetric, positive semi-definite solutions of the DARE which gives a stable steady-state filter. 
		
		\begin{Lemma} \label{lem1} \cite{Kailath}
			Provided that $(\CM, \AM)$ is detectable and $(\AM, \QM)$ 
			is stabilizable 
			and $\SMM_0 \succeq 0$, then 
			$\lim_{t \rightarrow \infty} \SMM_t = \SMM$, 
			$\lim_{t \rightarrow \infty} \KM_t = \bar \KM$ exponentially fast
			where $\{\SMM_t\}_t$ is the solution of the Riccati recursion (\ref{riccati}) 
			and $\SMM$ its the unique stabilizing solution of DARE (\ref{ARE}).
		\end{Lemma}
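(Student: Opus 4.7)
The plan is to split the lemma into three subgoals: existence of a positive semi-definite stabilizing solution $\SMM$ of the DARE (\ref{ARE}); uniqueness of such a solution; and exponential convergence $\SMM_t \to \SMM$ from any $\SMM_0 \succeq 0$. The exponential convergence $\KM_t \to \KM$ would then follow immediately from continuity of the map $\SMM_t \mapsto \KM_t$ defined by (\ref{kt}) together with the convergence of $\SMM_t$.

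For existence and uniqueness I would invoke the standard symplectic-pencil argument: the DARE is equivalent to selecting an $n$-dimensional invariant subspace of the $2n\times 2n$ Hamiltonian pencil associated with the filtering problem. Stabilizability of $(\AM,\QM)$ ensures that no generalized eigenvalues of this pencil lie on the unit circle, so a ``stable'' invariant subspace exists, and detectability of $(\CM,\AM)$ forces the associated $\SMM$ to be positive semi-definite and the closed-loop matrix $\FM := \AM - \KM\CM$ (with $\KM$ as in (\ref{Kbar})) to satisfy $\rho(\FM) < 1$. A more hands-on alternative is to run the recursion from $\SMM_0 = 0$: stabilizability upper-bounds the iterates by the cost-to-go of any stabilizing policy, detectability lower-bounds them via observability Gramians, and the resulting monotone-nondecreasing sequence converges to a fixed point of (\ref{ARE}) that must be stabilizing.

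For exponential convergence I would set $\Delta_t := \SMM_t - \SMM$ and apply the identity $\XM^{-1} - \YM^{-1} = \XM^{-1}(\YM-\XM)\YM^{-1}$ to the difference of (\ref{riccati}) and (\ref{ARE}), combined with the Woodbury formula, to obtain a recursion of the form
\[
\Delta_{t+1} = \FM_t\,\Delta_t\,\tilde\FM_t^{\top},
\]
where $\FM_t$ and $\tilde\FM_t$ are explicit matrix-valued functions of $\SMM_t$ that both tend to $\FM$ as $\SMM_t\to\SMM$. Since $\rho(\FM)<1$, the linear map $\Delta\mapsto \FM\Delta\FM^\top$ on symmetric matrices is a strict contraction in the norm induced by the Lyapunov solution of $\FM^\top\PM\FM-\PM=-\IM$, and the time-varying recursion inherits this contraction once $\SMM_t$ lies in a neighborhood of $\SMM$. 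To extend this to arbitrary $\SMM_0\succeq 0$, I would use the classical monotonicity/comparison argument for the Riccati iteration: iterates from $\SMM_0=0$ are monotone-nondecreasing with limit $\SMM$, iterates from any $\SMM_0\succeq \SMM$ are monotone-nonincreasing with the same limit, and a general $\SMM_0\succeq 0$ can be sandwiched between two such sequences, each converging exponentially at the rate governed by $\rho(\FM\otimes\FM)<1$.

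The main obstacle is precisely this passage from local linear contraction to global exponential convergence. The neighborhood analysis near $\SMM$ is routine once $\rho(\FM)<1$ is available, but arbitrary positive semi-definite initializations may sit far from $\SMM$, and one must rule out the possibility of drift or transient expansion before the iterates reach the basin of local contraction. The monotonicity/comparison argument sketched above is what does this bookkeeping, and since the full proof is standard material developed in detail in the reference cited in the statement of the lemma, I would invoke that result rather than reproduce the comparison inequalities for the Riccati map here, the purpose being only to secure the steady-state convergence needed for the subsequent regret analysis of Algorithm \ref{algkf}.
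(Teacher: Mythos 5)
The paper offers no proof of this lemma at all---it is stated as a known result and attributed entirely to the cited reference (Kailath, Sayed, and Hassibi, \emph{Linear Estimation})---so your proposal, which sketches the standard existence/uniqueness/convergence argument and then explicitly defers to that same reference, is taking essentially the same route. Your outline is a faithful summary of the textbook proof (modulo the minor point that the recursion in the form (\ref{riccati}) presupposes $\SMM_t \succ 0$, so initializations with singular $\SMM_0 \succeq 0$ require the equivalent covariance-form update), and nothing further is needed.
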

		
		
		\begin{Lemma} \label{lem2}
			Denote the instantaneous drift as
			\begin{equation}{\label{wt}}
			\wv_t =  \bar \xv_{t+1} - \AM \bar \xv_t.
			\end{equation}
			For the steady-state KF, the state estimation error 
			$\tilde\xv_{t+1} = \bar\xv_{t+1} - \hat \xv_{t+1}$ propagates according to the linear system
			\begin{equation} \label{xtilde}
			\tilde \xv_{t+1} = \HM \tilde \xv_{t} + \bar \wv_t, 
			\end{equation}
			with $\HM= \AM - \bar \KM \CM$ the closed-loop system, driven by the process $\bar \wv_t = \wv_t - \bar \KM \vv_t$. 
			If $(\CM, \AM)$ is detectable and $(\AM, \QM)$ is controllable, this closed-loop system is stable.
		\end{Lemma}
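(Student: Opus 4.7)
The plan is to prove the two claims by separate, standard arguments: an algebraic derivation for the error recursion, and an invocation of classical Kalman filter theory for stability.

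First I would derive the recursion by direct substitution. Starting from the definition $\tilde\xv_{t+1} = \bar\xv_t{+1} - \hat\xv_{t+1}$, I substitute for $\bar\xv_{t+1}$ using the drift definition (\ref{wt}), which gives $\bar\xv_{t+1} = \AM\bar\xv_t + \wv_t$, and for $\hat\xv_{t+1}$ using the steady-state version of the KF update (\ref{kt}), namely
\begin{equation*}
\hat\xv_{t+1} = \AM\hat\xv_t + \bar\KM(\yv_t - \CM\hat\xv_t).
\end{equation*}
Introducing the observation residual $\vv_t \equiv \yv_t - \CM\bar\xv_t$ (which is merely a notational device; no generative assumption is needed here), I rewrite $\yv_t - \CM\hat\xv_t = \CM\tilde\xv_t + \vv_t$. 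Collecting terms gives
\begin{equation*}
\tilde\xv_{t+1} = \AM\tilde\xv_t + \wv_t - \bar\KM(\CM\tilde\xv_t + \vv_t) = (\AM - \bar\KM\CM)\tilde\xv_t + (\wv_t - \bar\KM\vv_t),
\end{equation*}
which is exactly (\ref{xtilde}) with $\HM = \AM - \bar\KM\CM$ and $\bar\wv_t = \wv_t - \bar\KM\vv_t$.

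Next I would establish stability of the closed-loop map $\HM$. By Lemma \ref{lem1}, under detectability of $(\CM, \AM)$ and stabilizability of $(\AM, \QM)$ (implied here by controllability), the Riccati recursion (\ref{riccati}) converges exponentially to the unique positive semidefinite stabilizing solution $\SMM$ of the DARE (\ref{ARE}). By the very definition of the stabilizing solution, the associated closed-loop matrix $\HM = \AM - \bar\KM\CM$ has spectral radius strictly less than one, which is the standard Kalman filter stability conclusion (see, e.g., \cite{Kailath}). The stability of the linear system (\ref{xtilde}) follows immediately.

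Overall, the proof is essentially a bookkeeping exercise followed by a citation. The only mildly delicate point, rather than a genuine obstacle, is to be precise that $\vv_t$ is simply defined as $\yv_t - \CM\bar\xv_t$ rather than assumed to be a zero-mean stochastic disturbance, so that the recursion (\ref{xtilde}) holds identically for any sequences $\{\bar\xv_t\}$ and $\{\yv_t\}$, consistent with the distribution-free setting adopted in the rest of the paper.
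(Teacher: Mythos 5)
Your proof is correct. Note that the paper itself gives no proof of Lemma~\ref{lem2} --- it is stated as a standard fact --- and your argument is precisely the expected one: the algebraic substitution $\bar\xv_{t+1}=\AM\bar\xv_t+\wv_t$, $\hat\xv_{t+1}=\AM\hat\xv_t+\bar\KM(\yv_t-\CM\hat\xv_t)$, together with the purely definitional residual $\vv_t=\yv_t-\CM\bar\xv_t$, yields $\tilde\xv_{t+1}=(\AM-\bar\KM\CM)\tilde\xv_t+(\wv_t-\bar\KM\vv_t)$ identically, and stability of $\HM$ is exactly what ``stabilizing solution'' of the DARE means under the detectability/controllability hypotheses of Lemma~\ref{lem1}. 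Your emphasis that $\vv_t$ is a notational device rather than a stochastic assumption is the right reading of the paper's distribution-free setting. Two trivial remarks: fix the typo $\bar\xv_t{+1}$ (should be $\bar\xv_{t+1}$), and be aware that ``stable'' here means spectral radius of $\HM$ less than one, which is weaker than the condition $\bar\sigma(\HM)<1$ assumed in the subsequent lemma --- a gap in the paper's own chain of hypotheses, not in your proof.
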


		\begin{Lemma} \label{lem3}  
			Suppose $\left(\CM, \AM\right)$ is detectable and $\left(\AM, \QM\right)$ is controllable. Let the KF algorithm \ref{algkf} be run on the sequence $ S = \{\yv_0,  \yv_1 , \dots\, \yv_{T-1}\}$ with the initial values $\hat \xv_0 = \mathbf{0}$ and $\SMM_0 = \IM_n$, 
			and generating the estimates $\{\hat \xv_0, \hat \xv_1, \dots, \hat \xv_T \}$. 
			Then for any $\alpha>0$ and for all sequences of targets $\{\bar\xv_0, \bar \xv_1, \dots, \bar \xv_T \}$, one has 
			\begin{equation} \label{reg}
			\begin{split}
			L_T  \leq \bar r V_T + \bar r \| \bar \xv_0 \|^2 
			 + \bar r a \left( \frac{1}{\alpha} 
			\sum_{t = 0}^{T-1} \| \tilde \xv_t \|^2 + \alpha  \sum_{t=0}^{T-1} \| \bar \xv_{t+1} - \AM \bar \xv_t \|^2\right),
			\end{split}
			\end{equation}
			where 
			\begin{equation} \label{rbar}
			\bar r = \bar \sigma \left( \VM^{-\frac{1}{2}} (\VM + \CM \SMM \CM^\top) \VM^{-\frac{1}{2}}\right), \quad
			a = \bar \sigma (\SMM^{-1}) ,
			\end{equation}
			with $\SMM$ the steady-state value of $\SMM_t$,
			and where $\bar \sigma (.)$ denotes the largest singular value of its argument.
		\end{Lemma}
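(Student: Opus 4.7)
My plan is to exploit the ``completing-the-square'' identity intrinsic to the Kalman measurement update, use the steady-state Riccati equation to control the one-step state propagation, and isolate the drift via Young's inequality.

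I would first decompose the instantaneous prediction error through the (arbitrary) reference trajectory: with $\vv_t := \yv_t - \CM\bar\xv_t$ and $\tilde\xv_t = \bar\xv_t - \hat\xv_t$ one has $\yv_t - \CM\hat\xv_t = \vv_t + \CM\tilde\xv_t$. The algebraic workhorse is the steady-state measurement-update identity
\begin{equation*}
\|\bar\xv_t - \hat\xv_t\|_{\SMM^{-1}}^2 + \|\vv_t\|_{\VM^{-1}}^2 = \|\bar\xv_t - \hat\xv_{t|t}\|_{\MM}^2 + \|\yv_t - \CM\hat\xv_t\|_{\RM^{-1}}^2 ,
\end{equation*}
where $\MM := \SMM^{-1} + \CM^\top\VM^{-1}\CM$, $\RM := \VM + \CM\SMM\CM^\top$, and $\hat\xv_{t|t} := \hat\xv_t + \MM^{-1}\CM^\top\VM^{-1}(\yv_t - \CM\hat\xv_t)$ is the (steady-state) filtered estimate. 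This is obtained by completing the square in the quadratic $\xv \mapsto \|\xv - \hat\xv_t\|_{\SMM^{-1}}^2 + \|\yv_t - \CM\xv\|_{\VM^{-1}}^2$ around its minimiser $\hat\xv_{t|t}$, evaluated at $\xv = \bar\xv_t$. Since the definition of $\bar r$ is equivalent to $\VM^{-1} \preceq \bar r\,\RM^{-1}$, this yields the per-round bound
\begin{equation*}
\|\yv_t - \CM\hat\xv_t\|_{\VM^{-1}}^2 \leq \bar r\bigl(\|\tilde\xv_t\|_{\SMM^{-1}}^2 + \|\vv_t\|_{\VM^{-1}}^2 - \|\bar\xv_t - \hat\xv_{t|t}\|_{\MM}^2\bigr),
\end{equation*}
and summation already accounts for the $\bar r V_T$ term.

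To handle the remaining $-\bar r\sum_t\|\bar\xv_t - \hat\xv_{t|t}\|_{\MM}^2$, I would invoke the DARE $\SMM = \AM\MM^{-1}\AM^\top + \QM \succeq \AM\MM^{-1}\AM^\top$; by a Schur-complement argument this is equivalent to $\MM \succeq \AM^\top\SMM^{-1}\AM$. Combined with the one-step identity $\AM(\bar\xv_t - \hat\xv_{t|t}) = \tilde\xv_{t+1} - \wv_t$ (from $\hat\xv_{t+1} = \AM\hat\xv_{t|t}$ and $\wv_t := \bar\xv_{t+1} - \AM\bar\xv_t$), this gives
\begin{equation*}
\|\bar\xv_t - \hat\xv_{t|t}\|_{\MM}^2 \geq \|\tilde\xv_{t+1}\|_{\SMM^{-1}}^2 - 2\,\tilde\xv_{t+1}^\top\SMM^{-1}\wv_t + \|\wv_t\|_{\SMM^{-1}}^2 .
\end{equation*}
Plugging back and summing, the pure squares telescope ($\sum_t\|\tilde\xv_t\|_{\SMM^{-1}}^2 - \sum_t\|\tilde\xv_{t+1}\|_{\SMM^{-1}}^2 = \|\tilde\xv_0\|_{\SMM^{-1}}^2 - \|\tilde\xv_T\|_{\SMM^{-1}}^2$, the endpoint dropped as nonpositive), leaving a residual cross term $2\sum_t\tilde\xv_{t+1}^\top\SMM^{-1}\wv_t$. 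I would split this by Young's inequality $2|\uv^\top\SMM^{-1}\ev| \leq \frac{1}{\alpha}\|\uv\|_{\SMM^{-1}}^2 + \alpha\|\ev\|_{\SMM^{-1}}^2$ with $\uv = \tilde\xv_{t+1}$, $\ev = \wv_t$: this is precisely where the free parameter $\alpha$ enters and generates the symmetric structure $\frac{1}{\alpha}\sum\|\tilde\xv_t\|^2 + \alpha\sum\|\wv_t\|^2$. Passing from weighted to Euclidean norms via $\SMM^{-1}\preceq a\,\IM_n$, and using the initialisation $\hat\xv_0 = \mathbf{0}$, $\SMM_0 = \IM_n$ for the leading residual, then produces the claimed form.

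The step I expect to be most delicate is the reconciliation between the time-varying Riccati iterates $\SMM_t$ that actually drive Algorithm~\ref{algkf} and the steady-state $\SMM$ in which $\bar r$, $a$, $\MM$ and $\RM$ are defined. Lemma~\ref{lem1} delivers the exponential convergence $\SMM_t \to \SMM$, but one must either carry the argument through with time-varying quantities and collapse at the end, or work at steady-state from the outset and absorb the transient mismatch into the $\bar r\|\bar\xv_0\|^2$ term; getting the coefficient of $\|\bar\xv_0\|^2$ to come out as $\bar r$ (rather than $\bar r a$) hinges on this accounting. A secondary subtlety is the timing of Young's inequality: it must be applied \emph{after} telescoping the $\SMM^{-1}$-weighted errors, so the final coefficients on $\sum\|\tilde\xv_t\|^2$ and $\sum\|\wv_t\|^2$ come out as the symmetric pair $\frac{1}{\alpha}$ and $\alpha$ rather than an asymmetric pair arising from a premature $(1+\alpha)$-splitting of $\|\tilde\xv_{t+1} - \wv_t\|_{\SMM^{-1}}^2$.
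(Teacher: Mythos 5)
Your proposal is correct in substance and arrives at the stated bound, but it assembles the argument differently from the paper. The paper starts from $\|\tilde\xv_{t+1}\|_{\SMM_{t+1}^{-1}}^2$, immediately applies the $(1+\tfrac{1}{\alpha})/(1+\alpha)$ splitting of Lemma~\ref{lem2a} to separate the Kalman-update term from the drift $\bar\xv_{t+1}-\AM\bar\xv_t$, replaces $\SMM_{t+1}^{-1}$ by $\AM^{-\top}(\SMM_t^{-1}+\CM^\top\VM^{-1}\CM)\AM^{-1}$ via Lemma~\ref{lem1a} (which requires $\AM$ nonsingular), and only then completes the square to expose $-(1+\tfrac{1}{\alpha})\|\yv_t-\CM\hat\xv_t\|^2_{(\VM+\CM\SMM_t\CM^\top)^{-1}}$ before telescoping; crucially it carries the \emph{time-varying} $\SMM_t$ throughout and passes to the steady-state constants $\bar r$ and $a$ only in the final line. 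You instead keep the measurement update as an exact completion-of-squares identity, use the DARE through a Schur complement (thereby avoiding the invertibility of $\AM$ that Lemma~\ref{lem1a} needs --- a genuine improvement), and introduce $\alpha$ only at the end via Young's inequality on the cross term $2\tilde\xv_{t+1}^\top\SMM^{-1}\wv_t$; this is cleaner and makes the role of $\alpha$ transparent. Two small costs of your ordering, both of which you partly anticipate: (i) because the cross term involves $\tilde\xv_{t+1}$, Young's inequality yields $\tfrac{1}{\alpha}\sum_{t=1}^{T}\|\tilde\xv_t\|^2$ rather than the stated $\tfrac{1}{\alpha}\sum_{t=0}^{T-1}\|\tilde\xv_t\|^2$, and the stray $\tfrac{1}{\alpha}\|\tilde\xv_T\|^2_{\SMM^{-1}}$ is absorbed by the leftover $-\|\tilde\xv_T\|^2_{\SMM^{-1}}$ from the telescope only when $\alpha\geq 1$, so for general $\alpha>0$ you prove a shifted-index variant of (\ref{reg}) (harmless for the downstream Theorem, but not literally the stated inequality); (ii) working with the steady-state $\SMM$ from the outset gives $\|\tilde\xv_0\|^2_{\SMM^{-1}}\leq a\|\bar\xv_0\|^2$ and hence the coefficient $\bar r a$ on $\|\bar\xv_0\|^2$, whereas the paper gets exactly $\bar r$ because $\SMM_0=\IM_n$ makes $\|\tilde\xv_0\|^2_{\SMM_0^{-1}}=\|\bar\xv_0\|^2$; recovering the stated constant therefore requires running your argument with $\SMM_t$ and invoking $\SMM_t\to\SMM$ only at the end, which is also the point where the paper itself is least rigorous.
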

		\begin{proof}
			We start by forming $\| \tilde \xv_{t+1}\|_{\SMM_{t+1}^{-1}}^2$ as follows
			\begin{equation}
			\begin{split}
			\|\tilde \xv_{t+1} \|_{\SMM_{t+1}^{-1}}^2  & =
			\| \AM \tilde \xv_t  - \AM (\SMM_t^{-1} + \CM^\top \VM^{-1} \CM )^{-1} \CM^\top \VM^{-1} \\
			& . (\yv_t - \CM \hat\xv_t) + \bar \xv_{t+1} - \AM \bar \xv_t \|^2_ {\SMM_{t+1}^{-1}} .
			\end{split}
			\end{equation}
			Using an
			upper bound from Hassibi and Kailath \cite{Hassibi-kf}  (Lemma \ref{lem2a} in Appendix),
			for all $\alpha > 0$ we have that
			\begin{equation}
			\begin{split}
			\|\tilde \xv_{t+1} \|_{\SMM_{t+1}^{-1}}^2&  \leq 
			(1 + \frac{1}{\alpha})  \| \AM \tilde \xv_t  - \AM (\SMM_t^{-1} + \CM^\top \VM^{-1} \CM )^{-1} \CM^\top \VM^{-1} \\
			& . (\yv_t - \CM \hat\xv_t) \|^2_{\SMM_{t+1}^{-1}} + (1 + \alpha) \| \bar \xv_{t+1} - \AM \bar \xv_t \|^2_ {\SMM_{t+1}^{-1}} .
			\end{split}
			\end{equation}
			Now, using lemma \ref{lem1a} in Appendix we substitute $\SMM_{t+1}^{-1}$ by $\bar {\mathbf{\Pi}}_t^{-1}$ in the first term to get
			\begin{equation}
			\begin{split}
			\|  \tilde\xv_{t+1} \|_{\SMM_{t+1}^{-1}}^2  \leq 
			& (1+ \frac{1}{\alpha}) \| \tilde \xv_t \|^2_{\SMM_t^{-1}} \\
			& + (1 + \frac{1}{\alpha}) \tilde\xv_t ^\top \CM^{\top}\VM^{-1} \CM \tilde \xv_t \\
			& + (1 + \frac{1}{\alpha}) \left(\yv_t - \CM \hat \xv_t\right)^\top \VM^{-1} \CM  \\
			& . \left(\SMM_t^{-1} + \CM^\top\VM^{-1} \CM \right)^{-1} \CM^\top\VM^{-1} \left(\yv_t - \CM \hat \xv_t\right) \\
			& - 2 (1+ \frac{1}{\alpha}) \tilde\xv_t^\top \CM^\top \VM^{-1} \left(\yv_t - \CM \hat \xv_t\right) \\
			& + (1 + \alpha) \|  \bar \xv_{t+1} - \AM \bar \xv_t \|^2_{\SMM_{t+1}^{-1}}. \\
			\end{split}
			\end{equation}
			Completing the squares yields
			\begin{equation} \label{eq:14}
			\begin{split}
			\|  \tilde\xv_{t+1} \|_{\SMM_{t+1}^{-1}}^2 \leq 
			& (1 + \frac{1}{\alpha}) \| \tilde \xv_{t} \|_{\SMM_t^{-1}}^2 \\
			& + (1 + \frac{1}{\alpha}) \| \yv_t - \CM \bar \xv_t \|^2_{\VM^{-1}}\\
			& - (1 + \frac{1}{\alpha})  
			\left(\yv_t - \CM \hat \xv_t\right)^\top \VM^{-1} \\
			& . \left(\CM \SMM_t \CM^\top \VM^{-1} + \IM_p \right)^{-1} \left(\yv_t - \CM \hat \xv_t\right) \\
			& + (1 + \alpha) \|  \bar \xv_{t+1} - \AM \bar \xv_t \|^2_{\SMM_{t+1}^{-1}}.    \\
			\end{split}
			\end{equation}
			Next, summing over $t = 0,2, \dots, T - 1$, and using the telescoping property we get that
			\begin{equation}	
			\begin{split}
			L_T & \leq 
			\bar r V_T + \bar r \| \bar \xv_0 \|^2 \\
			& + \bar r \left( \frac{1}{\alpha} \sum_{t = 0}^{T-1}\| \tilde \xv_t \|_{\SMM_t^{-1}}^2  
			+ \alpha \sum_{t = 0}^{T-1} \|  \bar \xv_{t+1} - \AM \bar \xv_t \|_{\SMM_{t+1}^{-1}}^2 \right) ,
			\end{split}
			\end{equation}
			from which the claim follows.  \end{proof}
		
		Define
		\begin{equation} \label{WT}
		\begin{cases}
		W_T = \sum_{t=0}^{T - 1} \|  \bar \xv_{t+1} - \AM \bar \xv_t \|^2 \\
		\tilde X_T = \sum_{t=0}^{T - 1} \| \tilde \xv_t \|^2,
		\end{cases}
		\end{equation}
		where $W_T$ denotes the total drift 
		(cumulative squared norm of process residuals) 
		and $\tilde X_T$ indicates the cumulative state estimation error. 
		We use next Lemma to upper bound explicitly $\tilde X_T$.
		
		\begin{Lemma} 
			Consider the linear system 
			\begin{equation} 
			\tilde \xv_{t+1} = \HM \tilde \xv_t + \bar \wv_t ,
			\end{equation}
			with $\bar \sigma (\HM) < 1$. Then
			\begin{equation} \label{Xtildebound}
			\begin{split}
			\tilde X_T  \leq 2 b \| \tilde \xv_0 \|^2 
			+ 4 c \sum_{t = 0}^{T - 1} \| \bar \wv_{ t} \|^2  .
			\end{split}
			\end{equation}
			where 
			\begin{equation} \label{c}
			b = 1/\left(1 - \bar \sigma^2 (\HM) \right), \quad c = \left(1 + \bar \sigma^2 (\HM) \right) / \left( 1 - \bar \sigma^2 (\HM) \right)^3.
			\end{equation}
		\end{Lemma}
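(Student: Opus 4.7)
The plan is to solve the linear recursion $\tilde\xv_{t+1} = \HM\tilde\xv_t + \bar\wv_t$ explicitly as
\begin{equation}
\tilde\xv_t = \HM^t \tilde\xv_0 + \sum_{k=0}^{t-1} \HM^{t-1-k} \bar\wv_k,
\end{equation}
and then exploit that $\rho := \bar\sigma(\HM) < 1$ controls the decay of both the free response and the impulse response, using repeatedly the submultiplicativity $\bar\sigma(\HM^j) \leq \rho^j$.

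First I would split $\|\tilde\xv_t\|^2$ via $\|u+v\|^2 \leq 2\|u\|^2 + 2\|v\|^2$ into a homogeneous piece, bounded by $2\rho^{2t}\|\tilde\xv_0\|^2$, and a forced piece. Summing the homogeneous piece over $t=0,\dots,T-1$ and using $\sum_{t\geq 0}\rho^{2t} = 1/(1-\rho^2) = b$ already produces the $2b\|\tilde\xv_0\|^2$ term in the claim.

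For the forced piece I would first pull the norm inside the sum via the triangle inequality, and then apply Cauchy-Schwarz with the half-geometric weight split $\rho^{t-1-k} = \rho^{(t-1-k)/2}\cdot\rho^{(t-1-k)/2}$, giving
\begin{equation}
\left\|\sum_{k=0}^{t-1} \HM^{t-1-k}\bar\wv_k\right\|^2 \leq \left(\sum_{k=0}^{t-1}\rho^{t-1-k}\right)\sum_{k=0}^{t-1}\rho^{t-1-k}\|\bar\wv_k\|^2 \leq \frac{1}{1-\rho}\sum_{k=0}^{t-1}\rho^{t-1-k}\|\bar\wv_k\|^2.
\end{equation}
Summing over $t$ and swapping the order of summation reduces the double sum to $\sum_k\|\bar\wv_k\|^2 \sum_{t=k+1}^{T-1}\rho^{t-1-k} \leq \frac{1}{1-\rho}\sum_k\|\bar\wv_k\|^2$, so the forced piece contributes at most $\frac{2}{(1-\rho)^2}\sum_t\|\bar\wv_t\|^2$.

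The final step is to reconcile the coefficient $\frac{2}{(1-\rho)^2}$ with the stated $4c = \frac{4(1+\rho^2)}{(1-\rho^2)^3}$. For $\rho \in [0,1)$, elementary algebra shows $\frac{2}{(1-\rho)^2} \leq \frac{4(1+\rho^2)}{(1-\rho^2)^3}$ (equivalently $(1-\rho)(1+\rho)^3 \leq 2(1+\rho^2)$), so my direct bound is in fact tighter and can simply be relaxed into the stated form. The only real technical subtlety is the choice of Cauchy-Schwarz weights: an unweighted split would produce a spurious factor of $T$ when one double-sums, whereas the half-geometric weighting above keeps both geometric sums finite and thus the overall constants $O(1)$ in $T$.
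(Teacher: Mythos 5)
Your proof is correct, and although it follows the paper's overall skeleton---solve the recursion explicitly via $\tilde \xv_t = \HM^t \tilde \xv_0 + \sum_{k=0}^{t-1}\HM^{t-1-k}\bar\wv_k$, split off the homogeneous response with $\|u+v\|^2 \le 2\|u\|^2 + 2\|v\|^2$, and convert the squared convolution sum into a weighted sum of squares before swapping the order of summation---it diverges at the one step that actually determines the constant. The paper applies Jensen's inequality with the summable weights $1/(k+1)^2$ (normalized by $\Gamma_t = \sum_{k=0}^{t-1}(k+1)^{-2} \le 2$), which plants a factor $(k+1)^2\,\bar\sigma^{2k}(\HM)$ inside the sum and forces the polylogarithmic estimate $\sum_{k\ge 0}(k+1)^2 x^k \le (1+x)/(1-x)^3$, whence the coefficient $4c$. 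You instead apply Cauchy--Schwarz with the half-geometric weights $\rho^{(t-1-k)/2}$, $\rho = \bar\sigma(\HM)$, so that both resulting geometric sums are bounded by $1/(1-\rho)$ and the forced part contributes at most $\tfrac{2}{(1-\rho)^2}\sum_t \|\bar\wv_t\|^2$. Since $(1-\rho)(1+\rho)^3$ attains its maximum $27/16 < 2 \le 2(1+\rho^2)$ on $[0,1)$, your constant is strictly smaller than $4c$ for every admissible $\rho$, and the stated bound follows by relaxation; your closing observation that an unweighted Cauchy--Schwarz split would leave a factor growing with $t$ (hence with $T$) after the double sum is also accurate. In short: same architecture, but a cleaner and quantitatively sharper treatment of the convolution term, at the small cost of the extra algebraic verification needed to recover the paper's specific constants $b$ and $c$.
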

		\begin{proof}
			For any $t = 0, \dots T - 1$, one has
			\begin{equation} \label{Hinf}
			\tilde \xv_t = \HM^t \tilde \xv_0 + \sum_{k = 0}^{t - 1} \HM^{k} \bar \wv_{t-k-1}.
			\end{equation}
			Using the triangular inequality, we get that
			\begin{equation}
			\| \tilde \xv_t \| \leq  \bar \sigma^{t} (\HM) \| \tilde \xv_0 \|
			+ \sum_{k = 0}^{t - 1} \bar \sigma^{k} (\HM) \| \bar \wv_{ t-k-1} \| .
			\end{equation}
			Hence
			\begin{equation}
			\begin{split}
			\| \tilde \xv_t \|^2 \leq 2 \bar \sigma^{2t} (\HM) \| \tilde \xv_0 \|^2 
			+ 2 \left(\sum_{k = 0}^{t - 1} \bar \sigma^{k} (\HM) \| \bar \wv_{ t-k-1} \|\right)^2 ,
			\end{split}
			\end{equation}
			which can be written as
			\begin{equation} \label{eqeq}
			\begin{split}
			\| \tilde \xv_t \|^2 & \leq 2 \bar \sigma^{2t} (\HM) \| \tilde \xv_0 \|^2 \\
			& + 2 \left( \frac{\sum_{k = 0}^{t - 1} \frac{1}{(k + 1)^2} \left(\Gamma_t (k + 1)^2 \bar \sigma^{k} (\HM) \| \bar \wv_{ t-k-1} \|\right)} {\Gamma_t}  \right)^2 ,
			\end{split}
			\end{equation}
			where $\Gamma_t = \sum_{k = 0}^{t - 1} \frac{1}{(k + 1)^2}$. Next, we use Jensen's inequality to bound the second term in (\ref{eqeq}) as
			\begin{equation}
			\begin{split}
			\| \tilde \xv_t \|^2 \leq 2 \bar \sigma^{2t} (\HM) \| \tilde \xv_0 \|^2 
			+ 2  \sum_{k = 0}^{t - 1} \Gamma_t (k + 1)^2 \bar \sigma^{2k} (\HM) \| \bar \wv_{ t-k-1} \|^2  ,
			\end{split}
			\end{equation}
			Summing over $t = 0, \dots, T-1$, yields
			\begin{equation} 
			\begin{split}
			\tilde X_T \equiv \sum_{t=0}^{T-1} \| \tilde \xv_t \|^2 & \leq 2 \sum_{t = 0}^{T - 1} \bar \sigma^{2t} (\HM) \| \tilde \xv_0 \|^2 \\
			& + 2 \sum_{t = 0}^{T - 1} \Gamma_t \sum_{k = 0}^{t - 1} (k + 1)^2 \bar \sigma^{2k} (\HM)\| \bar \wv_{ t-k-1} \|^2 ,
			\end{split}
			\end{equation}
			which can be written as
			\begin{equation}
			\begin{split}
			\tilde X_T & \leq 2 \sum_{t = 1}^{T} \bar \sigma^{2(t-1)} (\HM) \| \tilde \xv_0 \|^2 \\
			& + 2 \sum_{t = 1}^{T} \Gamma_{t-1} \sum_{k = 0}^{t - 1} (k + 1)^2 \bar \sigma^{2k} (\HM)\| \bar \wv_{ t-k} \|^2 .
			\end{split}
			\end{equation}
			Using the geometric series convergence in the first term, and change of indexing in the second term, we get
			\begin{equation} \label{eq:Xtilde_T}
			\begin{split}
			\tilde X_T  \leq 2 b \| \tilde \xv_0 \|^2 
			+ 2 \sum_{t = 1}^{T} \Gamma_{t-1} \| \bar \wv_{ t} \|^2 \sum_{k = 0}^{t - 1} (k + 1)^2 \bar \sigma^{2k} (\HM) .
			\end{split}
			\end{equation}
			The Riemann zeta function and the polylogarithmic series are upper bounded as
			\begin{equation} \label{zeta}
			\Gamma_{t-1} = \sum_{k = 1}^{t} \frac{1}{k^2} \leq 2 \qquad \forall t \geq 1,
			\end{equation} 
			and
			\begin{equation} \label{polylog}
			\sum_{k = 0}^{t - 1} (k + 1)^2 (\bar \sigma^2 (\HM))^{k} \leq \frac{\left(1 + \bar \sigma^2 (\HM) \right)}{\left( 1 - \bar \sigma^2 (\HM) \right)^3},
			\end{equation}
			respectively. Substituting (\ref{zeta}) and (\ref{polylog}) in (\ref{eq:Xtilde_T}) gives the result. 
		\end{proof}
		Finally, using the above lemmas, we prove the main result of this section.
		\begin{Theorem}
			Suppose $\left(\CM, \AM\right)$ is detectable and $\left(\AM, \QM\right)$ is controllable. Let the KF algorithm \ref{algkf} be run on the sequence $ S = \{\yv_0,  \yv_1 , \dots, \yv_{T-1} \}$ with initial values be chosen as $\hat \xv_0 = \mathbf{0}$ and $\SMM_0 = \IM_n$, generating the estimates $\{\hat \xv_0, \hat \xv_1, \dots, \hat \xv_{T} \}$
			. Then for any sequence of targets $\{\bar \xv_0, \bar \xv_1, \dots, \bar \xv_T  \}$, one has 
			\begin{equation} \label{B1}
			\begin{split}
			L_T \leq  & \bar r V_T  + \bar r \| \bar \xv_0 \|^2 \\
			& + 2 \bar r a \sqrt{ 2 W_T\left( b \| \bar \xv_0 \|^2 + 4 c \left( W_T + \bar \sigma(\bar \KM^\top \bar \KM) V_T \right) \right) },
			\end{split}
			\end{equation}
		\end{Theorem}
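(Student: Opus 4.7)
The plan is to combine Lemma 3 (the regret inequality with a free parameter $\alpha>0$) with Lemma 4 (the explicit bound on $\tilde X_T$), and then to optimise the parameter $\alpha$ via an AM--GM argument. Concretely, I would start from
\begin{equation*}
L_T \;\leq\; \bar r V_T + \bar r \|\bar \xv_0\|^2 + \bar r a\Bigl(\tfrac{1}{\alpha}\tilde X_T + \alpha W_T\Bigr), \qquad \forall\,\alpha>0,
\end{equation*}
and replace $\tilde X_T$ by an explicit upper bound in terms of $W_T$, $V_T$ and $\|\bar\xv_0\|^2$.

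To do so, I would use the initialisation $\hat \xv_0 = \mathbf{0}$, which gives $\tilde \xv_0 = \bar \xv_0$, and apply Lemma 4:
\begin{equation*}
\tilde X_T \;\leq\; 2b\|\bar\xv_0\|^2 + 4c\sum_{t=0}^{T-1}\|\bar \wv_t\|^2.
\end{equation*}
Lemma 2 supplies $\bar\wv_t = \wv_t - \bar\KM\vv_t$, so the elementary inequality $\|u-v\|^2 \leq 2\|u\|^2 + 2\|v\|^2$, combined with $\|\bar\KM\vv_t\|^2 \leq \bar\sigma(\bar\KM^\top\bar\KM)\|\vv_t\|^2$ and the identification of $\sum\|\vv_t\|^2$ with $V_T$ (up to absorbing the weighting by $\VM^{-1}$ into the singular-value factor), yields
\begin{equation*}
\sum_{t=0}^{T-1}\|\bar\wv_t\|^2 \;\leq\; 2W_T + 2\bar\sigma(\bar\KM^\top\bar\KM)\,V_T.
\end{equation*}
Setting $M_T := 2b\|\bar\xv_0\|^2 + 8c\bigl(W_T + \bar\sigma(\bar\KM^\top\bar\KM)V_T\bigr)$, this gives $\tilde X_T \leq M_T$.

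The final step is the optimisation: substituting $M_T$ into the Lemma 3 bound gives $L_T \leq \bar r V_T + \bar r\|\bar\xv_0\|^2 + \bar r a(M_T/\alpha + \alpha W_T)$, valid for every $\alpha>0$. Since the left-hand side is independent of $\alpha$, I may minimise the right-hand side. AM--GM gives $\min_{\alpha>0}(M_T/\alpha + \alpha W_T) = 2\sqrt{M_T W_T}$, attained at $\alpha^\star = \sqrt{M_T/W_T}$. Pulling a factor $2$ out of $M_T$ under the square root then produces exactly
\begin{equation*}
L_T \;\leq\; \bar r V_T + \bar r\|\bar\xv_0\|^2 + 2\bar r a\sqrt{2W_T\bigl(b\|\bar\xv_0\|^2 + 4c(W_T + \bar\sigma(\bar\KM^\top\bar\KM)V_T)\bigr)},
\end{equation*}
which is the claimed bound.

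The only genuine obstacle is the bookkeeping: tracking the factors $2$ and $4c$ through the splitting of $\|\bar\wv_t\|^2$, and reconciling the weighted $\VM^{-1}$-norm in the definition of $V_T$ with the unweighted norm that appears naturally from $\|\bar\KM\vv_t\|^2$, so that the coefficient $\bar\sigma(\bar\KM^\top\bar\KM)$ in front of $V_T$ really does match the statement. Everything else is mechanical: Lemma 3 provides the telescoping regret with a free $\alpha$, Lemma 4 discharges $\tilde X_T$ explicitly, and the AM--GM balance $A/\alpha + B\alpha \geq 2\sqrt{AB}$ delivers the advertised square-root form.
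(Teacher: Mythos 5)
Your proposal is correct and follows essentially the same route as the paper: Lemma 3 with free $\alpha$, the bound $\sum_t\|\bar\wv_t\|^2 \leq 2\left(W_T + \bar\sigma(\bar\KM^\top\bar\KM)V_T\right)$ plugged into Lemma 4, and then the AM--GM optimisation of $\alpha$ (the paper's Proposition 1) with the optimal $\alpha^\star = \sqrt{M_T/W_T}$. Even the subtlety you flag about reconciling the $\VM^{-1}$-weighted norm in $V_T$ with the unweighted $\|\bar\KM\vv_t\|^2$ is present (and glossed over in the same way) in the paper's own argument.
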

		with $\bar \KM$ is the steady-state Kalman gain being given in (\ref{Kbar}), and where constants $\bar r$, $a$, $b$, and $c$ are defined as in (\ref{rbar}) and (\ref{c}), respectively.
		
		\begin{proof}
			From Lemma 2 we have $\| \bar \wv_t \|^2 =\| \wv_t - \bar \KM \vv_t \|^2$. Hence
			\begin{equation} \label{wbar}
			\sum_{t = 0}^{T-1} \| \bar \wv_t \|^2 \leq 2 \left(W_T + \bar \sigma \left(\bar \KM^\top \bar \KM \right) V_T\right).
			\end{equation}
			Using (\ref{wbar}) in (\ref{Xtildebound}) one obtains
			\begin{equation} \label{xtildebound}
			\begin{split}
			\tilde X_T  \leq 2 b \| \bar \xv_0 \|^2 
			+ 8 c \left(W_T + \bar \sigma \left(\bar \KM^\top \bar \KM \right) V_T\right).
			\end{split}
			\end{equation}
			Now, substituting (\ref{xtildebound}) in (\ref{reg}) and setting $\alpha$ as (see Proposition 1 in Appendix)
			\begin{equation}
			\alpha = \sqrt{\frac{2 \left( b \| \bar \xv_0 \|^2 + 4 c \left( W_T + \bar \sigma(\bar \KM^\top \bar \KM) V_T \right) \right)}{W_T}}, 
			\end{equation} 
			gives the result.
		\end{proof}
		\begin{Remark}
			One main conclusion is that the constant factor $\bar r$ in (\ref{B1}) is strictly greater than 1 which implies that the absolute regret 
			\begin{equation}
			\begin{split}
			\mathcal{R_T} = L_T - V_T  & \leq \bar \sigma \left(\CM \Sigma \CM^\top \VM^{-1} \right) V_T \\
			& + \mathcal{O} \left( \sqrt{W_T \left(W_T + V_T \right)} \right),
			\end{split}
			\end{equation} 
			depends linearly on 
			$V_T$. Note that as is implied by the $H_{\infty}$ norm lower bound obtained in \cite{Hassibi-kf}, the case $c_1 = 1$ (see Remark \ref{Rem2}) is not achievable in the worst-case bound of the KF.  { If the cumulative loss $V_T$ is small and if the reference predictor sequence evolves approximately as $\bar x_{t+1} = A \bar x_t$, then $W_T = o(T)$ would be small, leading to a vanishing regret bound. On the other hand, if the cumulative loss $V_T$ is large (linear in $T$) and the offline reference predictor is not evolving as $\bar x_{t+1} = A \bar x_t$, then $W_T$ would be a large quantity and the bound would be $\mathcal{O}(T)$. }
			
		\end{Remark}
		
		{
			\begin{Remark}
				A number of known/extreme cases can be considered. For instance, suppose that the process noise $\wv_t$ equals the state $\bar \xv_t$ itself. Then, from (\ref{wt}) we get
				\begin{equation}
				\bar x_{t+1} =  \underbrace{(\AM + \IM )}_{\bar \AM}  \bar x_t,
				\end{equation}
				If $\bar \AM$ is stable, this yields 
				$\bar \xv_t \rightarrow 0$ as $t \rightarrow \infty$.
				Thus the total drift $W_T$ converges to zero and the bound becomes equivalent to the worst-case bound with stationary target. On the other hand, if $\bar \AM$ is unstable, $\bar \xv_t$ would grow unbounded, and the total drift $W_T$ will be quite large and one would get a rather large bound indicating a poor performance of the KF. 
				
			\end{Remark}
		}
		\begin{Remark}
			For the {\em special} case of the RLS algorithm where 
			$$ \AM = \IM_n \quad \QM = \mathbf{0} \quad \CM = \hv_t^\top \quad \VM = \IM_p ,$$
			the following worst-case bound obtained by Vaits et al. \cite{Vaits-15}
			\begin{equation}
			L_T \leq L_T {\left( \{\bar \xv_t \}\right)} + \mathcal{O} \left(T^{\frac{2}{3}}\left( W_T(\{\bar \xv_t \}) \right)^{\frac{1}{3}} \right),
			\end{equation}
			with $L_T {\left( \{\bar \xv_t \}\right)}$ indicating the cumulative loss of any sequence of targets $\{\bar \xv_t \}$, and where $W_T (\{\bar \xv_t \}) = \sum_{t = 0}^{T - 1} \| \bar \xv_{t+1} - \bar \xv_{t}\|^2$ denotes the total drift (See \cite{Vaits-15}, Section VI for details). One can observe that the factor 1 multiplying the term $L_T {\left( \{\bar \xv_t \}\right)}$, leading to a vanishing tracking regret bound when the total drift is sublinear. On the other hand, the RLS dependency to the total drift $\mathcal{O} \left(T^{\frac{2}{3}}\left( W_T(\{\bar \xv_t \}) \right)^{\frac{1}{3}} \right)$ is worse than the KF bound dependency $\mathcal{O} \left( \sqrt{W_T \left(W_T + V_T \right)} \right)$ when the cumulative drift is sublinear in $T$. 
		\end{Remark}

		\section{Comparison with the $H_{\infty}$ Setting}

		This section provides a comparison with the $H_{\infty}$ setting used in adaptive filtering. 
		Motivated by the assumption that the data 
		is close to the linear time-invariant state-space model 
		(\ref{sysdyn}), analyses of $H_\infty$ estimation aims to bound the maximum energy gain from the unknown disturbances to the state estimation error. 
		
		\begin{Theorem} \cite{Hassibi-kf}
			({\em $H_{\infty}$ norm bound of the Kalman filter}) Consider the system dynamic (\ref{sysdyn}). Run the KF algorithm 1 and let the initial values be chosen as $\hat \xv_0 = \mathbf{0}$ and $\SMM_0 = \IM$. 
			Then for any $T$, one has
			\begin{equation} \label{Hinfnormbound}
			\begin{split}
			& \sup_{\bar \xv_0, \wv , \vv \in l_2}
			\frac{\sum_{t = 0}^{T - 1}\| \tilde \xv_t \|_{\CM^\top \VM^{-1}\CM}^2}{  \| \bar \xv_0 \|^2 + \sum_{t=0}^{T - 1}\| \vv_t \|_{\VM^{-1}}^2 + \sum_{t=0}^{T - 1} \| \wv_t \|^2_{\QM^{-1}} } \\
			& \leq \left( \sqrt{\bar r} + 1 \right)^2,
			\end{split}
			\end{equation} 
		\end{Theorem}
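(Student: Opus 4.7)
The plan is to reduce the numerator of the $H_\infty$ ratio to the cumulative prediction loss $L_T$ via a triangle inequality, then bound $L_T$ by the denominator using a Lyapunov argument rooted in the Riccati recursion (\ref{riccati}).

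First, the identity $\yv_t - \CM\hat\xv_t = \CM\tilde\xv_t + \vv_t$ immediately gives $\|\CM\tilde\xv_t\|_{\VM^{-1}} \leq \|\yv_t - \CM\hat\xv_t\|_{\VM^{-1}} + \|\vv_t\|_{\VM^{-1}}$. Squaring, summing over $t=0,\dots,T-1$, and applying Minkowski's inequality in $\ell_2$ yields $\sqrt{\sum_t \|\tilde\xv_t\|^2_{\CM^\top\VM^{-1}\CM}} \leq \sqrt{L_T} + \sqrt{V_T}$, where $V_T = \sum_t \|\vv_t\|^2_{\VM^{-1}}$ under the assumed dynamics (\ref{sysdyn}).

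Second, I would establish the per-step Lyapunov decrement $\Phi_{t+1} - \Phi_t \leq \|\vv_t\|^2_{\VM^{-1}} + \|\wv_t\|^2_{\QM^{-1}} - \bar r^{-1}\|\yv_t - \CM\hat\xv_t\|^2_{\VM^{-1}}$ for $\Phi_t := \|\tilde\xv_t\|^2_{\SMM_t^{-1}}$. Three ingredients are needed: (i) the Riccati recursion (\ref{riccati}) gives $\SMM_{t+1} \succeq \QM$, hence $\|\wv_t\|^2_{\SMM_{t+1}^{-1}} \leq \|\wv_t\|^2_{\QM^{-1}}$; (ii) by the definition (\ref{rbar}) of $\bar r$, one has $(\CM\SMM\CM^\top + \VM)^{-1} \succeq \bar r^{-1}\VM^{-1}$, which produces the coefficient $\bar r^{-1}$ in front of the innovation term; (iii) an exact expansion of $\Phi_{t+1}$ using the error recursion $\tilde\xv_{t+1} = (\AM - \KM_t\CM)\tilde\xv_t - \KM_t\vv_t + \wv_t$, with the cross-terms in $\tilde\xv_t$ eliminated via the Riccati identity $\AM^\top\SMM_{t+1}^{-1}\AM \preceq \SMM_t^{-1} + \CM^\top\VM^{-1}\CM$, so that the quadratic form in $\tilde\xv_t$ on the right-hand side collapses exactly into $\Phi_t$ with no surplus. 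Telescoping with $\Phi_0 = \|\bar\xv_0\|^2$ and dropping the nonnegative $\Phi_T$ then yields $L_T \leq \bar r D$, where $D := \|\bar\xv_0\|^2 + V_T + W_T^{\QM}$ is precisely the denominator of the $H_\infty$ ratio and $W_T^{\QM} := \sum_t \|\wv_t\|^2_{\QM^{-1}}$.

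Combining the two steps, $\sqrt{L_T} \leq \sqrt{\bar r D}$ and $\sqrt{V_T} \leq \sqrt{D}$, so $\sqrt{\sum_t \|\tilde\xv_t\|^2_{\CM^\top\VM^{-1}\CM}} \leq (\sqrt{\bar r}+1)\sqrt{D}$; squaring gives the claim. The main obstacle will be step two: the cross-terms in the expansion of $\Phi_{t+1}$ couple $\vv_t$ and $\wv_t$ through $\KM_t$, and a naive Young-style absorption of these cross-terms would leave a multiplicative factor $(1+\epsilon)$ on $\Phi_t$ which would compound over $T$ and destroy the bound. Avoiding this requires the exact Riccati cancellation described in (iii), and in the pre-steady-state regime an additional monotonicity argument for the Riccati iterate to obtain the time-uniform version of the eigenvalue bound $\bar r_t \leq \bar r$ used in (ii).
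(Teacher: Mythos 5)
The paper offers no proof of this theorem: it is imported verbatim from \cite{Hassibi-kf} with a citation, so there is no in-paper argument to compare against. Your reconstruction is, in substance, the argument of that reference: (a) the innovations dissipation inequality $\|\tilde \xv_{t+1}\|^2_{\SMM_{t+1}^{-1}} \leq \|\tilde \xv_t\|^2_{\SMM_t^{-1}} + \|\vv_t\|^2_{\VM^{-1}} + \|\wv_t\|^2_{\QM^{-1}} - \|\yv_t - \CM\hat\xv_t\|^2_{(\VM + \CM\SMM_t\CM^\top)^{-1}}$, telescoped to bound the innovation energy by the disturbance energy, followed by (b) the triangle (Minkowski) inequality applied to $\CM\tilde\xv_t = (\yv_t - \CM\hat\xv_t) - \vv_t$, which is exactly where the $(\sqrt{\bar r}+1)^2$ shape comes from. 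Your ingredients (i)--(iii) are the right ones, and (iii) in particular is the same Riccati comparison the paper isolates as Lemma~\ref{lem1a} and uses in its own proof of Lemma~\ref{lem3}; your warning that a Young-type split of the cross terms would compound a $(1+\epsilon)$ factor on $\Phi_t$ is well taken, and the exact completion of squares is indeed the only way to avoid it.

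The one point you flag but do not close is a genuine gap, and it is worth being precise about why. Your step (ii) needs $(\VM + \CM\SMM_t\CM^\top)^{-1} \succeq \bar r^{-1}\VM^{-1}$ \emph{for every} $t$, i.e.\ $\bar\sigma\bigl(\VM^{-1/2}(\VM + \CM\SMM_t\CM^\top)\VM^{-1/2}\bigr) \leq \bar r$ uniformly in $t$, whereas $\bar r$ in (\ref{rbar}) is defined through the steady-state solution $\SMM$ of (\ref{ARE}). Since the Riccati map is order-preserving, $\SMM_t \preceq \SMM$ for all $t$ follows from $\SMM_0 \preceq \SMM$; but the theorem fixes $\SMM_0 = \IM$, and $\IM \preceq \SMM$ is not implied by detectability and controllability (it holds, e.g., when $\QM \succeq \IM$, which fails for the paper's own simulation choice $\QM = 0.5\,\IM$). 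Without this, the telescoped bound only yields the constant $\max_{0 \leq t \leq T-1}\bar\sigma\bigl(\VM^{-1/2}(\VM + \CM\SMM_t\CM^\top)\VM^{-1/2}\bigr)$ in place of $\bar r$, which is how the transient version of the result is naturally stated. This imprecision is arguably inherited from the theorem statement itself (which mixes $\SMM_0 = \IM$ with the steady-state constant), but your proof as written does not supply the missing monotonicity hypothesis, so you should either add $\IM \preceq \SMM$ (or $\QM \succeq \IM$) as an assumption, or restate the bound with the running maximum.
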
 
		where $\bar r$ is defined as in (\ref{rbar}), and $l_2$ is the space of square-summable sequences.
		
		For a fair comparison, we derive a worst-case loss bound from the 
		$H_{\infty}$ norm bound (\ref{Hinfnormbound}) of the KF.
		
		\begin{Corollary}
			Consider the steady-state KF and let the initial values be chosen as
			$\hat \xv_0 = \mathbf{0}$ and $\SMM_0 = \IM_n$. Then for all sequence of observations $\{\yv_0, \yv_1, \dots \}$
			and for any sequence of targets $\{ \bar \xv_0, \bar \xv_1, \dots  \}$, 
			the total loss suffered by the algorithm is bounded by
			\begin{equation} \label{B3}
			\begin{split}
			L_T \leq & \left(1 + \left( \sqrt{\bar r} + 1\right)^2 \right) V_T + \left( \sqrt{\bar r} + 1\right)^2  \| \bar \xv_0 \|^2 \\
			& + \left( \sqrt{\bar r} + 1\right)^2 \bar \sigma (\QM^{-1}) W_T  \\
			&  + 2 \left( \sqrt{\bar r} + 1\right) \sqrt{V_T ( \| \bar \xv_0 \|^2 + V_T + \bar \sigma (\QM^{-1}) W_T)} ,
			\end{split}
			\end{equation} 
		\end{Corollary}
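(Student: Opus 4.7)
The plan is to treat $\{\bar \xv_t\}$ as if it were a true state trajectory of a system of the form (\ref{sysdyn}), by \emph{defining} the residuals $\vv_t = \yv_t - \CM \bar \xv_t$ and $\wv_t = \bar \xv_{t+1} - \AM \bar \xv_t$. With these identifications, $\sum_{t=0}^{T-1} \| \vv_t \|_{\VM^{-1}}^2 = V_T$ and $\sum_{t=0}^{T-1} \| \wv_t \|_{\QM^{-1}}^2 \leq \bar\sigma(\QM^{-1}) W_T$, so the $H_\infty$ norm bound in Theorem 2 immediately yields
\begin{equation*}
\sum_{t=0}^{T-1} \| \tilde \xv_t \|_{\CM^\top \VM^{-1} \CM}^2 \leq (\sqrt{\bar r}+1)^2 \left( \| \bar \xv_0 \|^2 + V_T + \bar \sigma(\QM^{-1}) W_T \right).
\end{equation*}

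Next, I bridge the filtered (state) error to the prediction loss $L_T$. Using the decomposition $\yv_t - \CM \hat \xv_t = (\yv_t - \CM \bar \xv_t) + \CM \tilde \xv_t$ and the triangle inequality for the weighted norm $\|\cdot\|_{\VM^{-1}}$, I get
\begin{equation*}
\| \yv_t - \CM \hat \xv_t \|_{\VM^{-1}}^2 \leq \| \vv_t \|_{\VM^{-1}}^2 + \| \tilde \xv_t \|_{\CM^\top \VM^{-1} \CM}^2 + 2 \| \vv_t \|_{\VM^{-1}} \| \tilde \xv_t \|_{\CM^\top \VM^{-1} \CM}.
\end{equation*}
Summing over $t$ and applying the Cauchy-Schwarz inequality to the cross term gives
\begin{equation*}
L_T \leq V_T + \sum_{t=0}^{T-1} \| \tilde \xv_t \|_{\CM^\top \VM^{-1} \CM}^2 + 2 \sqrt{V_T \cdot \sum_{t=0}^{T-1} \| \tilde \xv_t \|_{\CM^\top \VM^{-1} \CM}^2 }.
\end{equation*}

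Substituting the $H_\infty$ bound into both the quadratic term and under the square root, and then collecting the contributions of $V_T$, $\|\bar \xv_0\|^2$, and $\bar\sigma(\QM^{-1}) W_T$, produces exactly (\ref{B3}). I do not expect any real obstacle; the work is essentially bookkeeping. The only subtlety is recognizing that the $H_\infty$ analysis of Theorem 2 applies even though $\{\bar \xv_t\}$ is an \emph{arbitrary} comparator sequence rather than the true state: once we set $\vv_t$ and $\wv_t$ as above, the identity $\bar \xv_{t+1} = \AM \bar \xv_t + \wv_t$ and $\yv_t = \CM \bar \xv_t + \vv_t$ hold tautologically, so the bound transfers verbatim.
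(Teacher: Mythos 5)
Your proposal is correct and follows essentially the same route as the paper: both apply the $H_\infty$ norm bound of Theorem 2 to the tautologically defined residuals $\vv_t$, $\wv_t$, and both bridge $L_T$ to the cumulative state error via the decomposition $\yv_t - \CM\hat\xv_t = \vv_t + \CM\tilde\xv_t$. Your triangle-inequality-plus-Cauchy--Schwarz step is just the closed form of the paper's mechanism, which applies Lemma~\ref{lem2a} with a free parameter $\alpha$ and then optimizes $\alpha$ via Proposition~\ref{propvanilla}; the two yield the identical final bound.
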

		where $W_T$ is defined as in (\ref{WT}).
		
		\begin{proof}
			We re-write the $H_{\infty}$ norm bound (\ref{Hinfnormbound}) as
			\begin{equation} \label{Hinf1}
			\sum_{t = 0}^{T - 1} \| \tilde \xv_t \|_{\CM^\top \VM^{-1} \CM}^2 \leq \left( \sqrt{\bar r} + 1 \right)^2 \left( \| \bar \xv_0\|^2 + V_T + \bar \sigma (\QM^{-1}) W_T \right) .
			\end{equation}
			Using Lemma 5 in Appendix for any $\alpha > 0$, one has that
			\begin{equation} \label{eq:40}
			\begin{split}
			\| \yv_t - \CM \hat \xv_t \|_{\VM^{-1}}^2 \leq (1 & + \frac{1}{\alpha}) \| \yv_t - \CM \bar\xv_t \|_{\VM^{-1}}^2 \\
			&  + (1 + \alpha) \| \tilde \xv_t \|^2_{\CM^\top \VM^{-1} \CM} .
			\end{split}
			\end{equation} 
			Summing over $ t = 0, \dots, T - 1$, and then substituting (\ref{Hinf1}) in (\ref{eq:40}), one gets
			\begin{equation} \label{LT}	
			\begin{split}
			L_T & \leq (1 + \frac{1}{\alpha}) V_T \\
			& + (1 + \alpha) \left( \sqrt{\bar r} + 1\right)^2 
			\left( \| \bar \xv_0 \|^2 + V_T + \bar \sigma (\QM^{-1}) W_T \right) , 
			\end{split}
			\end{equation}
			Now, setting $\alpha$ as
			\begin{equation} \label{alfa}
			\alpha = \sqrt{\frac{V_T}{\left( \sqrt{\bar r} + 1\right)^2 ( \| \bar \xv_0 \|^2 + V_T + \bar \sigma (\QM^{-1}) W_T)}},
			\end{equation}
			gives the result. 
		\end{proof}

		\begin{Remark}
			Comparing the KF bounds in (\ref{B1}) and (\ref{B3}), one observes that the multiplicative factors 
			$\left(1 + \left(\sqrt{\bar r} + 1\right)^2 \right)$ and $\left( \sqrt{\bar r} + 1 \right)^2$ in the first two terms are worse for the bound (\ref{B3}) than for the bound (\ref{B1}) which equals $\bar r$. The KF bound (\ref{B1}) has dependency $\mathcal{O} \left( \sqrt{ W_T \left(W_T + V_T\right) } \right)$ to the cumulative drift while the dependency of the bound (\ref{B3}) is
			$\mathcal{O} \left( W_T + \sqrt{  V_T \left( W_T + V_T \right) } \right)$. However, the constant multiplicative factor to the drift, is worse for the bound in (\ref{B1}) than the bound in (\ref{B3}). 
			On the other hand, the bound (\ref{B3}) is worse if the cumulative loss $V_T$  is linear in $T$ which implies that first and last terms of the bound (\ref{B3}) become dominant and hence the multiplicative factors make the bound (\ref{B3}) worse than the bound (\ref{B1}).
			It is worth mentioning that both bounds derived from a \emph{minimax} approach and small bounds are obtained 
			in either case if $W_T = o(T)$ and the cumulative loss $V_T$ is small .	
		\end{Remark}

		\section{Simulation}
		
		We finish the paper with two sets of experiments illustrating 
		the use of the worst-case bounds of the KF algorithm. For both experiments the 
		discrete system dynamic matrices 
		$\AM \in \R^{n \times n}$ and $\CM \in \R^{p \times n}$ are generated randomly 
		using the {\em 'drss'} command in \emph{Matlab}. We set $T = 2000$, $n = 10$, $p = 4$.
		The weighting matrices are chosen as $\QM = 0.5 \IM$ and $\VM = \IM$. 
		The system initial states are set to zero. 
		For choosing $\wv_t$, two different cases
		of linear and sublinear drift are considered.
		In the first experiment a sequence of vector $\wv_t \in \R^{10}$ 
		is generated from a Gaussian distribution for which the instantaneous 
		drift $\| \bar \xv_{t+1} - \AM \bar \xv_{t} \|^2$ is constant (linear drift), 
		obeying $\yv_t = \CM \bar \xv_t + \vv_t$, where $\vv_t \sim \mathcal{N}(\mathbf{0}, \IM)$.
		The second experiment is based on sublinear drift. 
		Here, we use a polynomial decay of the drift, $\| \bar \xv_{t+1} - \AM \bar \xv_{t} \|^2 \leq t^{-\beta}$ for some $\beta > 0$. In this case $W_T \leq \log(T) + 1$ for $\beta = 1$, and $W_T \leq \frac{T^{(1 - \beta)} - \beta}{1 - \beta}$ otherwise. We set $\beta = 0.5$ so that the sequence of vectors $\wv_t \in \R^{10}$ rotating along a unit circle in a rate of $t^{-0.5}$ (sublinear drift). 
		
		Fig. 1 displays how the behavior of the worst-case 
		loss bounds for the KF algorithm 1 in (\ref{B1}) (KF bound 1)
		and the worst-case loss bound (\ref{B3}) 
		obtained form the $H_{\infty}$ norm bound (\ref{Hinfnormbound}) (KF bound 2), when 
		$t = 1, 2, \dots, 2000$ and the total drift is linear (upper panel) and sublinear (lower panel). 
		One can observe that the performance of the algorithm
		depends on the total amount of drift for which the algorithm performs worse in severe conditions (linear drift). 
		The worst-case KF-bound
		1 has superior performance than KF bound 2 when the total drift is sublinear whereas it has worse performance in case where the total drift is linear. It should be stressed that the worst-case
		bounds depend on the specific choice of the design parameters and system dynamic matrices. 
		
		The development of the empirical cumulative loss function of the KF algorithm, and empirical cumulative loss function of the sequence of targets is demonstrated 
		in Fig. 2. It is observed that the performance of the KF algorithm converges to the performance of the best sequence of targets when $W_T = o(T)$.  
		
		\begin{figure} \label{fig1} 
			\centering
			\includegraphics[width = 6in]{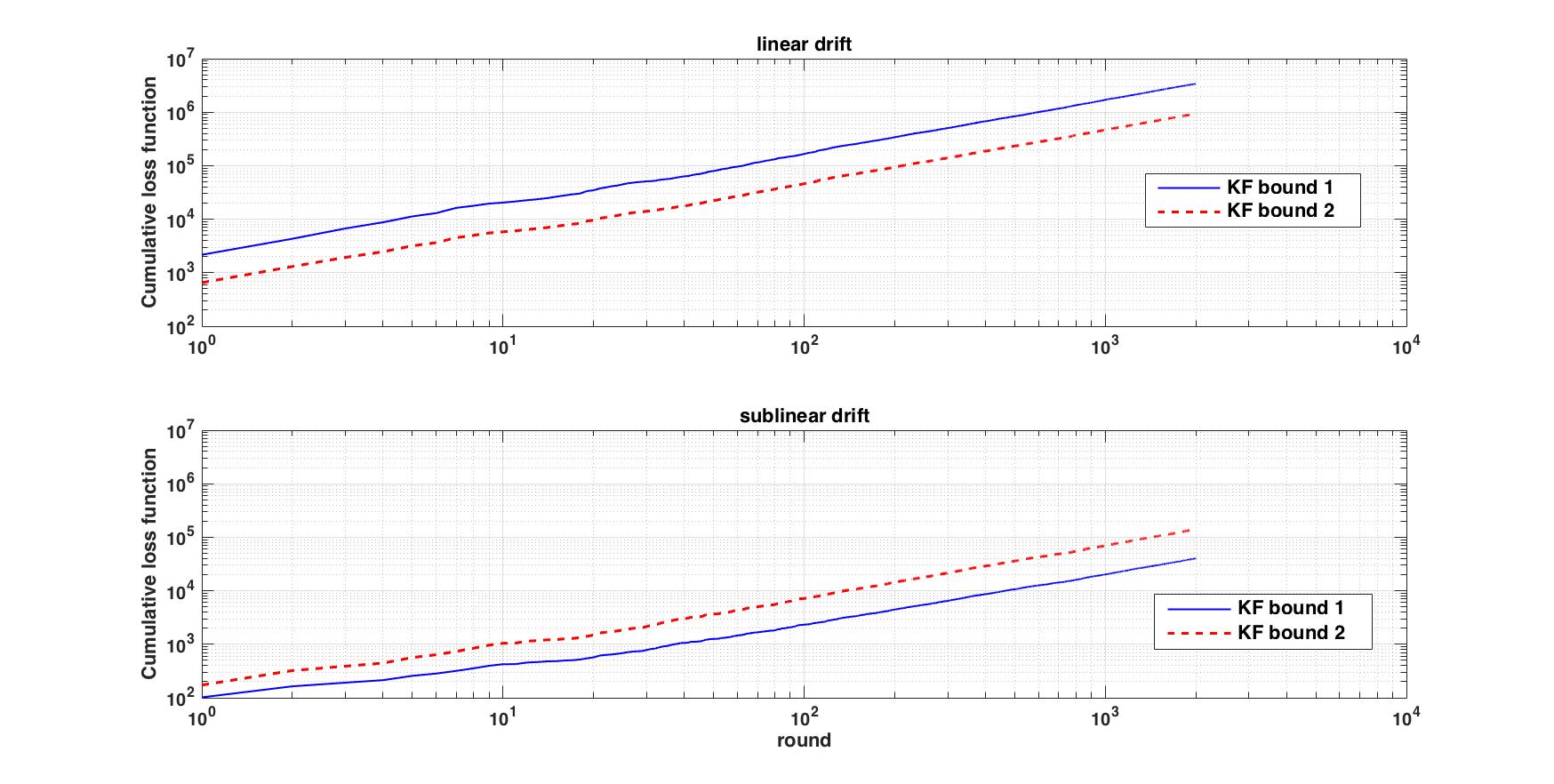}
			\caption{Comparison of the behavior of worst-case loss bounds in (\ref{B1}) and (\ref{B3}) for two experiments where the total drift is (upper panel) linear and (lower panel) sublinear.}
		\end{figure}
		
		\begin{figure} \label{fig2} 
			\centering
			\includegraphics[width = 6in]{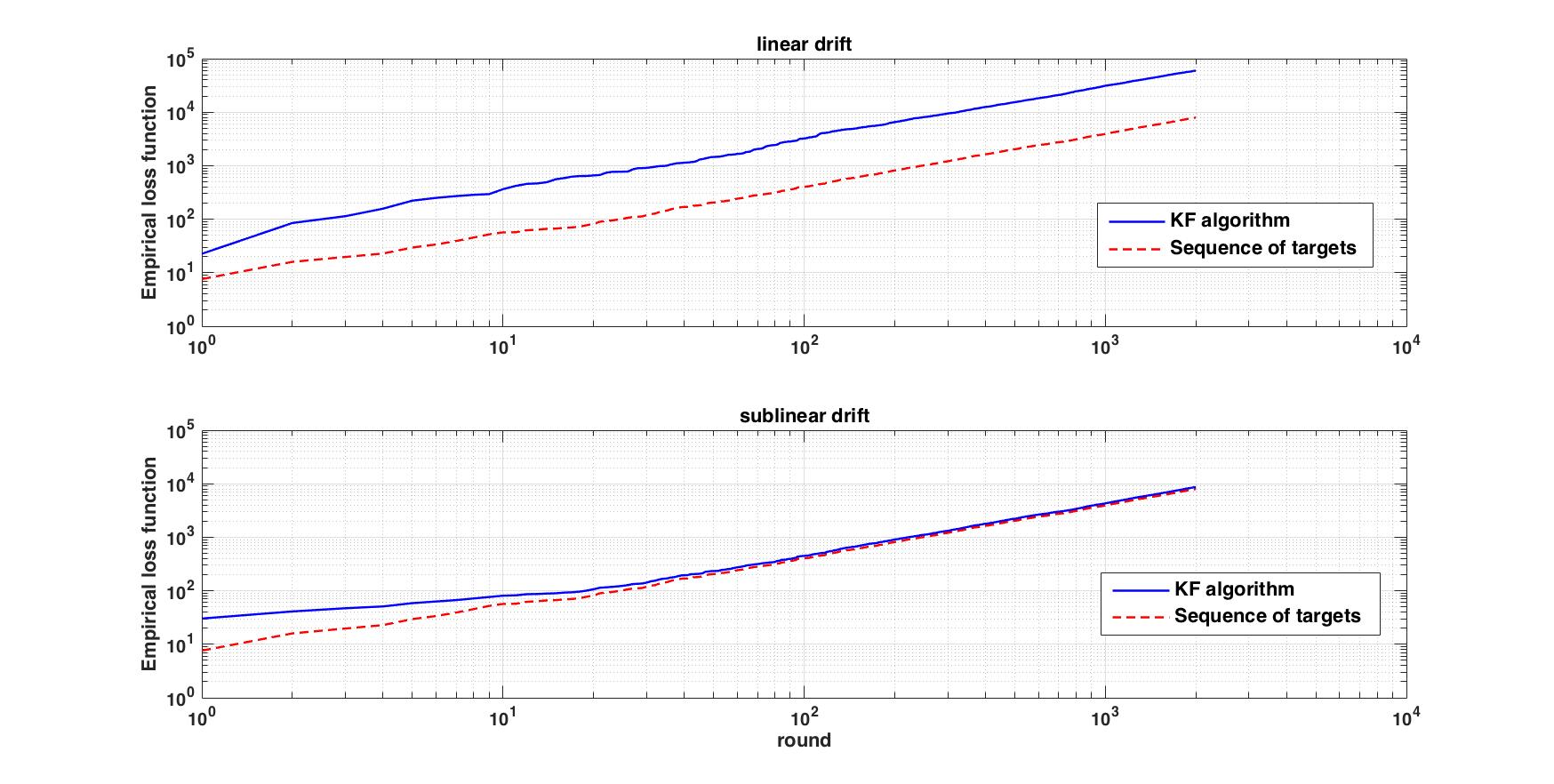}
			\caption{Empirical cumulative loss of the KF algorithm and that of the sequence of targets as a function of time when the drift is (upper panel) linear and (lower panel) sublinear.}
		\end{figure}
		
		\section{Conclusion and Future Works}
		This paper studied the worst-case performance of the KF algorithm using novel universal prediction perspective in online machine learning. 
		Tracking worst-case bounds 
		were proved and compared with the 
		bound derived from an $H_{\infty}$ setting. 
		The proposed bounds hold for a wide range of observation models and noise distributions. 
		The results do not require the data to be linearly related
		as opposed to $H_{\infty}$ setting which is based on 
		the assumption that data obeys a fixed linear dynamical model.
		It was shown that 
		the worst-case bounds scales proportional with the deviation of the comparator from a known predetermined dynamical model. 
		
		Future works will extend 
		the results to more general problem where the target 
		is any arbitrary linear combination of the state. 
		{An interesting open problem is to incorporate a time-varying dynamical model 
			in which case the tracking bounds will scale with the deviation of a reference predictor from the best sequence of dynamical models.
			It would also be interesting to extend the results
			to the case where the dimensionality of the state vector $\bar \xv_t \in \R^n$ is very high (e.g., $n \gg T$).
			Straightforward application of previous results will
			render the bound uninformative since the bound scales up linear 
			in the state dimension.
			Some recent work has examined the role of high-dimensionality in online RLS algorithms \cite{Angelosante10} and online mirror descent methods \cite{Hall15}, under the assumption that the problem has a proper sparsity structure.  
			A more general approach is to employ
			random projection technique and the celebrated Johnson-Lindenstrauss lemma which states roughly that, given an arbitrary set of $n$ points in a high-dimentional Euclidean space, there exists a linear map of this points in a low-dimensional Euclidean space such that all pairwise distances are preserved \cite{Ailon}, \cite{Vempala}. The goal would be to analyse the performance of the random projected KF algorithm relative to the performance of the computationaly intractable offline high-dimensional case.  }
		
		\appendix
		
		The proofs of the worst-case bounds make use of the following auxiliary results.
		\begin{Lemma} [Hassibi and Kailath \cite{Hassibi-kf}] \label{lem2a}
			For any vectors $\av$, $\bv$, any matrix $\MM \succ 0$, and for all $\alpha > 0$ we have
			\begin{equation}
			(\av + \bv)^\top \MM (\av + \bv) \leq \left(1+ \frac{1}{\alpha}\right) \av^\top \MM \av + (1+ \alpha) \bv^\top \MM \bv .
			\end{equation}
		\end{Lemma}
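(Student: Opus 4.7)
The plan is a direct algebraic expansion combined with a Young-type inequality tuned to the parameter $\alpha$. First I would expand the left-hand side as
$$(\av + \bv)^\top \MM (\av + \bv) = \av^\top \MM \av + 2\,\av^\top \MM \bv + \bv^\top \MM \bv,$$
so the whole claim reduces to controlling the cross term $2\,\av^\top \MM \bv$ from above by $\tfrac{1}{\alpha}\,\av^\top \MM \av + \alpha\,\bv^\top \MM \bv$; matching coefficients of $\av^\top \MM \av$ and $\bv^\top \MM \bv$ then produces exactly $(1 + 1/\alpha)$ and $(1+\alpha)$.

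For the cross-term bound, I would exploit $\MM \succ 0$ to introduce the symmetric positive-definite square root $\MM^{1/2}$ and apply the trivial non-negativity
$$\Bigl\| \tfrac{1}{\sqrt{\alpha}}\,\MM^{1/2}\av \;-\; \sqrt{\alpha}\,\MM^{1/2}\bv \Bigr\|^2 \;\geq\; 0.$$
Expanding this squared norm in the Euclidean inner product gives
$$\tfrac{1}{\alpha}\,\av^\top \MM \av + \alpha\,\bv^\top \MM \bv - 2\,\av^\top \MM \bv \;\geq\; 0,$$
which is precisely the required one-sided Young-type bound on $2\,\av^\top \MM \bv$. Substituting back into the expansion of $(\av+\bv)^\top \MM (\av+\bv)$ and collecting terms yields the statement.

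I do not expect any real obstacle: the only technical ingredient is the existence of the symmetric square root $\MM^{1/2}$, which is immediate from $\MM \succ 0$, and the argument is otherwise elementary. As a sanity check, equality is attained when $\MM^{1/2}\bv = \tfrac{1}{\alpha}\MM^{1/2}\av$, confirming that the coefficients $(1 + 1/\alpha)$ and $(1+\alpha)$ are the sharp ones produced by this weighting. The same proof would also go through with $\MM \succeq 0$, since one only uses the non-negativity of a quadratic form in $\MM^{1/2}$, but strict positive-definiteness is already assumed in the statement and is what the downstream uses (with $\MM = \SMM_{t+1}^{-1}$) require.
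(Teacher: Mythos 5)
Your proof is correct. The paper itself gives no proof of this lemma---it is simply quoted from Hassibi and Kailath---and your argument (expanding the quadratic form and bounding the cross term $2\,\av^\top \MM \bv$ via the nonnegativity of $\| \alpha^{-1/2}\MM^{1/2}\av - \alpha^{1/2}\MM^{1/2}\bv \|^2$) is the standard derivation of this weighted Young-type inequality, including the correct equality case and the observation that $\MM \succeq 0$ would suffice.
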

		
		\begin{Lemma} \label{lem1a}
			Assuming that $\AM$ is nonsingular, the following statement holds,
			\[
			\SMM_{t+1}^{-1} \preceq \AM^{-\top} \left(\SMM_t^{-1} + \CM^\top \VM^{-1} \CM \right) \AM^{-1},
			\]
		\end{Lemma}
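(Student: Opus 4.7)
The plan is to read off the claim essentially directly from the Riccati recursion (\ref{riccati}), exploiting the positive-definiteness of $\QM$ together with the order-reversing property of matrix inversion on the cone of positive definite matrices.

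First, I would set $\PM_t = (\SMM_t^{-1} + \CM^\top \VM^{-1} \CM)^{-1}$, which is well-defined and positive definite since $\SMM_t \succ 0$ and $\CM^\top \VM^{-1} \CM \succeq 0$. The Riccati recursion (\ref{riccati}) then reads $\SMM_{t+1} = \AM \PM_t \AM^\top + \QM$. Because $\QM \succ 0$ by assumption, this immediately gives the matrix inequality
\begin{equation}
\SMM_{t+1} \succeq \AM \PM_t \AM^\top.
\end{equation}
Moreover, since $\AM$ is nonsingular and $\PM_t \succ 0$, the right-hand side is itself positive definite, so both sides of this inequality are positive definite matrices.

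Next, I would invoke the standard monotonicity property of inversion: for positive definite matrices $\XM, \YM$, $\XM \succeq \YM$ implies $\XM^{-1} \preceq \YM^{-1}$. Applying this to the inequality above yields
\begin{equation}
\SMM_{t+1}^{-1} \preceq (\AM \PM_t \AM^\top)^{-1} = \AM^{-\top} \PM_t^{-1} \AM^{-1} = \AM^{-\top} (\SMM_t^{-1} + \CM^\top \VM^{-1} \CM) \AM^{-1},
\end{equation}
which is exactly the claim.

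There is essentially no serious obstacle here; the only subtlety is ensuring that $\AM \PM_t \AM^\top$ is genuinely positive definite (so that its inverse exists and the monotonicity of inversion applies), and this follows precisely from the assumed nonsingularity of $\AM$. The role of the lemma in the proof of Lemma \ref{lem3} is to furnish a computable upper bound on $\SMM_{t+1}^{-1}$ that decouples cleanly into a term in $\SMM_t^{-1}$ (to enable telescoping) plus the quadratic form $\CM^\top \VM^{-1} \CM$ (which later combines with the innovation to complete the square).
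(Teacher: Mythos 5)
Your proof is correct. It rests on the same underlying fact as the paper's — namely that $\SMM_{t+1} = \AM\PM_t\AM^\top + \QM$ exceeds $\AM\PM_t\AM^\top$ by the positive semidefinite term $\QM$ — but the mechanical route differs. The paper first inverts $\SMM_{t+1}$ explicitly via the Woodbury identity, obtaining $\SMM_{t+1}^{-1} = \bar{\mathbf{\Pi}}_t^{-1} - (\text{PSD correction})$ with $\bar{\mathbf{\Pi}}_t = \AM\PM_t\AM^\top$, and then drops the correction term; in effect it re-derives, in this special case, the antitonicity of inversion that you simply invoke as a known property of the Loewner order. Your version is shorter and cleaner: it needs only $\SMM_{t+1} \succeq \AM\PM_t\AM^\top \succ 0$ plus the standard order-reversing property of the inverse on the positive definite cone, and it makes transparent exactly where the nonsingularity of $\AM$ enters (to guarantee $\AM\PM_t\AM^\top$ is invertible). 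The paper's Woodbury computation buys nothing extra here beyond an explicit formula for the gap $\bar{\mathbf{\Pi}}_t^{-1} - \SMM_{t+1}^{-1}$, which is never used downstream, so your more elementary argument is, if anything, preferable.
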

		where $\SMM_t$ is the solution of the Riccati recursion  (\ref{riccati}).
		
		\begin{proof}
			From the Riccati recursion (\ref{riccati}) and using Woodbury matrix inversion lemma,
			$\SMM_{t+1}^{-1}$ is obtained as
			\begin{equation} \label{eq:11}
			\SMM_{t+1}^{-1} = \bar{\mathbf{\Pi}}^{-1}_t - \QM^{\frac{1}{2}} \bar{\mathbf{\Pi}}^{-1}_t \left(\IM + \QM^{\frac{1}{2}} \bar{\mathbf{\Pi}}^{-1}_t \QM^{\frac{1}{2}} \right)^{-1} \bar{\mathbf{\Pi}}^{-1}_t \QM^{\frac{1}{2}} ,
			\end{equation} 
			where $\bar{\mathbf{\Pi}}_t = \AM \left( \SMM_t^{-1} + \CM^\top \VM^{-1} \CM \right)^{-1} \AM^\top$. 
			Since the second term 
			in (\ref{eq:11}) is positive semidefinite, one gets
			\begin{equation}
			\SMM_{t+1}^{-1} \preceq \bar{\mathbf{\Pi}}^{-1}_t = \AM^{-\top} \left(\SMM_t^{-1} + \CM^\top \VM^{-1} \CM \right) \AM^{-1} .
			\end{equation}
			This completes the proof. 
		\end{proof}
		
		\begin{Proposition} \label{propvanilla}
			Let $a, b > 0$ be constants, then
			\begin{equation}
			\underset{\xi > 0}{\mathrm{inf}}  \frac{a}{\xi} + \xi b = 2 \sqrt {ab} .
			\end{equation}
		\end{Proposition}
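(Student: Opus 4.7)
The plan is to prove this two-line statement via the arithmetic-geometric mean (AM-GM) inequality, which delivers both the lower bound and the identification of the minimizer in a single step. First I would apply AM-GM to the two positive quantities $a/\xi$ and $\xi b$ to obtain
\[
\frac{1}{2}\left(\frac{a}{\xi} + \xi b\right) \geq \sqrt{\frac{a}{\xi}\cdot \xi b} \;=\; \sqrt{ab},
\]
which rearranges to $a/\xi + \xi b \geq 2\sqrt{ab}$ for every $\xi > 0$. Taking the infimum over $\xi > 0$ yields $\inf_{\xi > 0}(a/\xi + \xi b) \geq 2\sqrt{ab}$.

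Next I would exhibit a minimizer to show that this lower bound is attained. The equality case in AM-GM requires the two terms to coincide, so I solve $a/\xi = \xi b$ to get $\xi^\star = \sqrt{a/b}$, which is strictly positive since $a, b > 0$. Substituting back gives $a/\xi^\star + \xi^\star b = \sqrt{ab} + \sqrt{ab} = 2\sqrt{ab}$, so the infimum is achieved and equals $2\sqrt{ab}$.

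As an alternative route, one can argue by elementary calculus: the map $f(\xi) = a/\xi + \xi b$ is smooth on $(0, \infty)$ with $f'(\xi) = -a/\xi^{2} + b$ and $f''(\xi) = 2a/\xi^{3} > 0$, so $f$ is strictly convex, its unique critical point is $\xi^\star = \sqrt{a/b}$, and the minimum value is $f(\xi^\star) = 2\sqrt{ab}$. There is no genuine obstacle here; the proposition is a one-line optimization fact, invoked elsewhere in the paper only to justify the particular choices of the auxiliary scalar $\alpha$ used when tuning the bounds in Theorem~2 and Corollary~1.
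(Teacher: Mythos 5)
Your proof is correct. The paper disposes of this proposition in one line by setting the derivative of $f(\xi) = a/\xi + \xi b$ to zero and reading off $\xi = \sqrt{a/b}$; your primary argument instead goes through the AM--GM inequality, which is a genuinely different (if equally elementary) route. The AM--GM version is arguably the more complete of the two: it establishes the lower bound $a/\xi + \xi b \geq 2\sqrt{ab}$ uniformly over all $\xi > 0$ and then exhibits a point of attainment, whereas the paper's one-liner implicitly relies on the strict convexity of $f$ on $(0,\infty)$ (which you do supply in your calculus alternative via $f''(\xi) = 2a/\xi^{3} > 0$) to conclude that the unique critical point is the global minimizer rather than some other kind of stationary point. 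Either argument suffices for the role the proposition plays in the paper, namely justifying the optimized choices of $\alpha$ in Theorem~2 and Corollary~1, and your observation to that effect is accurate.
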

		\begin{proof}
			This is seen by choosing $\xi = \sqrt \frac{a}{b}$, obtained by equating the derivative to zero.
		\end{proof}
		
		
		\ifCLASSOPTIONcaptionsoff
		\newpage
		\fi

\end{document}